\documentclass[pdflatex,sn-mathphys-num]{sn-jnl}% Math and Physical Sciences Numbered Reference Style 
\usepackage{graphicx}%
\usepackage{multirow}%
\usepackage{amsmath,amssymb,amsfonts}%
\usepackage{amsthm}%
\usepackage{mathrsfs}%
\usepackage[title]{appendix}%
\usepackage{xcolor}%
\usepackage{textcomp}%
\usepackage{manyfoot}%
\usepackage{booktabs}%
\usepackage{algorithm}%
\usepackage{algorithmicx}%
\usepackage{algpseudocode}%
\usepackage{listings}%
\newtheorem{theorem}{Theorem}[section]
\newtheorem{lemma}[theorem]{Lemma}
\newtheorem{proposition}[theorem]{Proposition}
\newtheorem{corollary}[theorem]{Corollary}
\theoremstyle{definition}

\newtheorem{definition}[theorem]{Definition}

\numberwithin{equation}{section}

\allowdisplaybreaks
\begin{document}
	
	\title[Article Title]{A Durrmeyer-type variant of Gr\"unwald
		Interpolation Operators}
	
	%%=============================================================%%
	%% GivenName	-> \fnm{Joergen W.}
	%% Particle	-> \spfx{van der} -> surname prefix
	%% FamilyName	-> \sur{Ploeg}
	%% Suffix	-> \sfx{IV}
	%% \author*[1,2]{\fnm{Joergen W.} \spfx{van der} \sur{Ploeg} 
		%%  \sfx{IV}}\email{iauthor@gmail.com}
	%%=============================================================%%
	
	\author*[]{\fnm{P. C.} \sur{Vinaya}}\email{vinayapc01@gmail.com}

	\affil*[1]{\orgdiv{Department of Mathematics}, \orgname{Cochin University of Science and Technology}, \orgaddress{\street{Kalamassery}, \city{Cochin-22}, \postcode{682022}, \state{Kerala}, \country{India}}}
	
	%%==================================%%
	%% Sample for unstructured abstract %%
	%%==================================%%
	
	\abstract{
		\begin{abstract}
			
			In this paper, we construct a Durrmeyer-type variant of Gr\"unwald interpolation operators on the space $L^p[0,\pi]$. We prove their fundamental properties, including boundedness and convergence in the $L^p$-norm. We establish the convergence results using a Korovkin-type theorem in the setting of Banach function spaces. Furthermore, we obtain quantitative estimates for the convergence by means of the modulus of continuity and an appropriate $K$-functional.
		\end{abstract}
	}

	\keywords{Grünwald interpolation operators, Durrmeyer operators, modulus of continuity, Korovkin-type theorems}
	
	%%\pacs[JEL Classification]{D8, H51}
	
	\pacs[MSC Classification]{41A05, 41A10, 41A25, 41A35}

	%%\pacs[JEL Classification]{D8, H51}
	
	%%\pacs[MSC Classification]{35A01, 65L10, 65L12, 65L20, 65L70}
	
	\maketitle
\section{Introduction}
Many sequences of operators defined on $C[0,1]$, including Bernstein operators, Baskakov operators, and Stancu operators, do not, in general, possess approximation properties in the space $L^1[0,1]$. In 1930, L.~Kantorovich introduced an integral modification of the Bernstein polynomials to overcome this limitation \cite{kantorovich}. As a consequence, the convergence of such operators can be extended to the more general space $L^p[0,1]$, for $1 \leq p < +\infty$. A more general integral modification was later proposed by J.~L.~Durrmeyer in 1967 (see \cite{durrmeyer}). Since then, several Durrmeyer-type modifications of various operators have been introduced, and their approximation properties have been extensively studied. A comprehensive survey of these modifications can be found in \cite{gupta2016survey}. A few of these are given in \cite{durrmeyervar, durrmeyervar1, prakash2023}. This work disusses a Durrmeyer-type variant of the classical Gr\"unwald interpolation operators. A Kantorovkich-type variant of Gr\"unwald interpolation operators and their approximation properties are obtained in \cite{GK}. We establish convergence properties of these operators in $L^p[0,\pi]$ and obtain the rate of convergence using modulus of continuity and a suitable K-functional. First, we recall the definition of the Gr\"unwald interpolation operators on Chebyshev nodes. For this purpose, we begin with the definition of the Lagrange interpolation operators. These operators generate the unique polynomial of degree at most $n-1$ that interpolates a given continuous function on $[-1,1]$ on a set of $n$ distinct nodes. 

The Chebyshev nodes of the first kind on $[-1,1]$ are given by
\[
x_k = \cos\big(\theta_k^{(n)}\big), \ \text{where } \theta_k^{(n)} = \frac{2k-1}{2n}\pi,\; k=1,2,\dots,n.
\]
The Lagrange interpolation operator based on Chebyshev nodes can be expressed as
\[
L_n(f)(t) = \sum_{k=1}^n f(\cos \theta_k^{(n)})(-1)^{k+1} \frac{\cos(nt) \sin \theta_k^{(n)}}{n(\cos t - \cos \theta_k^{(n)})},
\]
where
\[
P_k(t) = (-1)^{k+1} \frac{\cos(nt) \sin \theta_k^{(n)}}{n(\cos t - \cos \theta_k^{(n)})},
\quad k = 1, 2, \dots, n.
\]

It is well known that the Lagrange interpolation operators on Chebyshev nodes does not converge (see \cite{faber, mills, grunwald1, mills, mills1, josef}).

In 1941, G.~Gr\"unwald established a convergence result for an averaged sequence of operators constructed using the Lagrange interpolation operators on Chebyshev nodes. 
He obtained the following theorem.

\begin{theorem}[\textbf{G. Grünwald}, \cite{grunwald}]\label{grun}
	Let $f\in C[-1,1]$, then 
	\[
	\lim\limits_{n\rightarrow\infty}\frac{1}{2}\{L_n(f)(\theta-\frac{\pi}{2n})+L_n(f)(\theta+\frac{\pi}{2n})\}=f(\cos(\theta)),
	\]
	where the convergence is uniform on the interval $[0,\pi]$.
\end{theorem}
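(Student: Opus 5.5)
The plan is to write the averaged operator as a discrete singular integral in the variable $\theta$. Set $g(\theta)=f(\cos\theta)$; this is an even, $2\pi$-periodic, uniformly continuous function on $\mathbb{R}$. Write $h=\frac{\pi}{2n}$ and
\[
G_n(f)(\theta)=\tfrac12\{L_n(f)(\theta-h)+L_n(f)(\theta+h)\}=\sum_{k=1}^n g(\theta_k^{(n)})\,R_n(\theta,\theta_k^{(n)}),
\qquad R_n(\theta,\theta_k)=\tfrac12\{P_k(\theta-h)+P_k(\theta+h)\}.
\]
Rather than work with the rational form of $P_k$, I will use the standard trigonometric form of the Lagrange fundamental functions at Chebyshev nodes (discrete orthogonality of $\cos j\theta$ on the nodes $\theta_k$):
\[
P_k(\theta)=\frac1n\Big[1+2\sum_{j=1}^{n-1}\cos j\theta\,\cos j\theta_k\Big].
\]
Averaging over $\theta\pm h$ replaces $\cos j\theta$ by $\cos(jh)\cos j\theta$. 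Using $2\cos j\theta\cos j\theta_k=\cos j(\theta-\theta_k)+\cos j(\theta+\theta_k)$, this gives
\[
R_n(\theta,\theta_k)=\frac1n\big[r_n(\theta-\theta_k)+r_n(\theta+\theta_k)\big],\qquad r_n(u)=\tfrac12+\sum_{j=1}^{n-1}\cos\tfrac{j\pi}{2n}\cos ju .
\]
Here $r_n$ is a Rogosinski-type kernel, equal to $\tfrac14[D_{n-1}(u+h)+D_{n-1}(u-h)]$ plus a harmless correction, with $D_{n-1}$ the Dirichlet kernel.

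The proof then reduces to three kernel facts.
\begin{enumerate}
\item Reproduction of constants: $\sum_k R_n(\theta,\theta_k)=1$. This is immediate since $L_n(1)=1$.
\item Uniform boundedness: $\sup_{n,\theta}\sum_k|R_n(\theta,\theta_k)|<\infty$.
\item Vanishing first moment: $\sup_\theta\sum_k|R_n(\theta,\theta_k)|\,|\theta-\theta_k|^{*}\to0$, where $|\cdot|^{*}$ denotes the distance taken modulo $2\pi$ and up to reflection $\theta\mapsto-\theta$. This is the natural distance because $g$ is even and periodic.
\end{enumerate}

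Granting these, fix $\varepsilon>0$ and choose $\delta$ from the uniform continuity of $g$. Then
\[
|G_n(f)(\theta)-g(\theta)|\le\sum_k|R_n(\theta,\theta_k)|\,|g(\theta_k)-g(\theta)|.
\]
Split the sum into nodes with $|\theta-\theta_k|^*<\delta$ and the rest. The first part is at most $\varepsilon\cdot\sup\sum|R_n|$ by fact 2. The second part is at most $2\|f\|_\infty\,\delta^{-1}\sum_k|R_n|\,|\theta-\theta_k|^*$, which tends to $0$ uniformly by fact 3. This proves uniform convergence on $[0,\pi]$. Alternatively, the same estimates give the bound $C\,\omega(g,\tfrac{\log n}{n})$.

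The main obstacle is the pointwise bound on $r_n$. I would first obtain a closed form by summing the geometric series. Alternatively, I would go back to the rational representation: since $\cos(n(\theta\pm h))=\mp\sin n\theta$, one gets
\[
R_n=\frac{(-1)^{k}\sin n\theta\,\sin\theta_k}{2n}\Big[\frac{1}{\cos(\theta-h)-\cos\theta_k}-\frac{1}{\cos(\theta+h)-\cos\theta_k}\Big].
\]
The difference of the two fractions produces an extra factor of order $h\sin\theta$ over a product of two denominators. This second-difference cancellation is exactly what converts the $1/|u|$ decay of the Dirichlet kernel into $1/u^2$ decay. From it I aim to show
\[
|r_n(u)|\le \frac{Cn}{1+n^2u^2}\qquad (|u|\le\pi),
\]
and this is the key estimate.

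Once that estimate holds, facts 2 and 3 are routine. The nodes $\theta_k$ are spaced $\pi/n$ apart, so
\[
\sum_k\frac1n\cdot\frac{n}{1+n^2u_k^2}\lesssim\sum_{m\ge0}\frac{1}{1+m^2}<\infty,
\qquad
\sum_k\frac{|u_k|}{1+n^2u_k^2}\lesssim\frac1n\sum_{m\le n}\frac{m}{1+m^2}\lesssim\frac{\log n}{n}\to0 .
\]
The term $r_n(\theta+\theta_k)$ is handled identically, measuring the distance of $\theta$ to $-\theta_k$. This is why the reflected distance $|\cdot|^*$ was used in fact 3.
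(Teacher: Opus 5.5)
The paper does not prove this theorem; it quotes it from Grünwald. The only ingredients it exposes are Lemma~\ref{lem}, the uniform bound on $\sum_k|S_{k,n}(\theta)|$, and Grünwald's off-diagonal decay $|S_{k,n}(t)|\lesssim n^{-2}(t-\theta_k^{(n)})^{-2}$ away from the node, which it reuses in the proof of Theorem~\ref{main}. Your argument has the same skeleton: partition of unity, uniform Lebesgue bound, decay away from the diagonal, and a near/far split. Your facts 2 and 3 are exactly those two ingredients. Your proposal is correct; what genuinely differs is how you obtain the kernel estimates.

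Grünwald works with the rational expression directly. That forces a case analysis near the endpoints, where the factors $\sin\theta\,\sin\theta_k^{(n)}$ must be balanced against small denominators. You instead expand $P_k$ in the cosine basis and recognize a Rogosinski kernel. The reflected term $r_n(\theta+\theta_k^{(n)})$ then absorbs the endpoint behaviour automatically.

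Your route is also easier to finish than you suggest. Since $nh=\pi/2$, the $\sin nu$ parts of $D_{n-1}(u\pm h)$ cancel exactly, so your ``harmless correction'' is zero and
\[
r_n(u)=\frac{\sin h\,\cos nu}{2(\cos u-\cos h)} .
\]
Combine this with the trivial bound $|r_n|\le n$ and with $|\cos u-\cos h|\ge c\,u^2$ for $2h\le|u|\le\pi$. This gives your key estimate $|r_n(u)|\le Cn/(1+n^2u^2)$ immediately, closing the one step you left as an aim. It also yields, in one stroke, Lemma~\ref{lem}, the decay bound the paper borrows from Grünwald, and the explicit rate $\omega(g,\frac{\log n}{n})$.

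Two cosmetic points. The sign in your rational formula should be $(-1)^{k+1}$, which is irrelevant for absolute values. Also, for $\theta,\theta_k^{(n)}\in[0,\pi]$ your reflected distance coincides with $|\theta-\theta_k^{(n)}|$. All you actually use is the inequality $|\theta-\theta_k^{(n)}|\le\mathrm{dist}(\theta+\theta_k^{(n)},2\pi\mathbb{Z})$.
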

The following lemma plays an important role in obtaining the theorem.
\begin{lemma}[\textbf{G. Grünwald}, \cite{grunwald}]\label{lem}
	\[
	\frac{1}{2}\sum\limits_{k=1}^n \left| P_k\left(\theta - \frac{\pi}{2n}\right) + P_k\left(\theta + \frac{\pi}{2n}\right) \right| < c_1,
	\]
	where $c_1>0$ is an absolute constant
\end{lemma}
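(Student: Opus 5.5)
We will rewrite each fundamental polynomial in trigonometric form and exploit the cancellation created by averaging over the two shifted points $\theta\pm\frac{\pi}{2n}$.

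\textbf{Step 1: trigonometric form.} Write $t_\pm=\theta\pm\frac{\pi}{2n}$ and $\theta_k=\theta_k^{(n)}$. Since $n t_\pm=n\theta\pm\frac{\pi}{2}$, we have $\cos(nt_\pm)=\mp\sin(n\theta)$. Hence
\[
P_k(t_+)+P_k(t_-)=(-1)^{k}\frac{\sin(n\theta)\sin\theta_k}{n}\left(\frac{1}{\cos t_+-\cos\theta_k}-\frac{1}{\cos t_- -\cos\theta_k}\right).
\]
Combining the two fractions, the numerator is $\cos t_- -\cos t_+=2\sin\theta\sin\frac{\pi}{2n}$. So the half-sum has absolute value
\[
\frac{|\sin n\theta|\,\sin\theta_k\,\sin\theta\,\sin\frac{\pi}{2n}}{n\,|\cos t_+-\cos\theta_k|\,|\cos t_--\cos\theta_k|}.
\]
This is the key gain. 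The single Lagrange terms behave like $1/(n|\theta-\theta_k|)$, whose sum is of order $\log n$. The averaged term instead decays like $1/(n^2(\theta-\theta_k)^2)$, which is summable.

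\textbf{Step 2: product-to-sum estimates.} Use
\[
|\cos t_\pm-\cos\theta_k|=2\Bigl|\sin\tfrac{t_\pm-\theta_k}{2}\Bigr|\,\sin\tfrac{t_\pm+\theta_k}{2}.
\]
For arguments in $[0,\pi]$ we have $\sin x\ge \frac{2}{\pi}x$ and $\sin\frac{\pi}{2n}\le\frac{\pi}{2n}$. By symmetry $\theta\mapsto\pi-\theta$ we may assume $\theta\in[0,\pi/2]$.

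Let $j$ be the index with $\theta_j$ nearest to $\theta$. For $k$ with $|k-j|\ge 2$, the distance satisfies $|t_\pm-\theta_k|\gtrsim |k-j|/n$.

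For the factors $\sin\frac{t_\pm+\theta_k}{2}$, we need
\[
\frac{\sin\theta\,\sin\theta_k}{\sin\frac{t_++\theta_k}{2}\,\sin\frac{t_-+\theta_k}{2}}\le C.
\]
This holds because $\sin\frac{a+b}{2}\gtrsim \max(\sin a,\sin b)$ for $a,b\in[0,\pi]$, up to the $\frac{\pi}{2n}$ shift. The shift is harmless except when $\theta$ is within $O(1/n)$ of the endpoints, and there $\sin\theta\lesssim 1/n$ compensates.

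With these estimates, the far terms are bounded by
\[
C\,\frac{1}{n}\cdot\frac{1}{n}\cdot\frac{n^2}{(k-j)^2}=\frac{C}{(k-j)^2},
\]
and $\sum_{m\ge 1} m^{-2}<\infty$.

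\textbf{Step 3: near terms.} For the $O(1)$ indices with $|k-j|\le 1$, the denominators can be small. One of $t_\pm$ can even coincide with $\theta_k$, since the nodes are spaced $\pi/n$ apart. In that case the formula is singular, but $P_k(t_\pm)$ is continuous and we use instead the trivial bound $|P_k(t)|\le 1$. This bound follows from
\[
P_k(t)=\frac{(-1)^{k+1}\cos nt\,\sin\theta_k}{n(\cos t-\cos\theta_k)},
\]
and the standard estimate $\left|\frac{\cos nt}{n(\cos t-\cos \theta_k)}\right|\le \frac{1}{\sin\theta_k}$. That estimate comes from writing $\cos nt=\cos nt-\cos n\theta_k$ and using $|\sin n x|\le n|\sin x|$-type inequalities on the mean-value form. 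So the near terms contribute at most a constant. Adding the near and far contributions gives an absolute constant $c_1$.

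The main obstacle is Step 2's uniform control near the endpoints $\theta\approx 0,\pi$, where $\sin\frac{t_\pm+\theta_k}{2}$ may be small. I would handle this first by the explicit split into the cases $\theta\le \pi/n$ and $\theta>\pi/n$. In the first case, $\sin\theta\le\pi/n$ and $t_-$ may be negative; we use $|\cos t_- -\cos\theta_k|=|\cos|t_-|-\cos\theta_k|$ and rerun the same bounds with $|t_-|$.
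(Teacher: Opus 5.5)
The paper does not prove this lemma; it quotes it from Gr\"unwald. Your argument therefore has to stand on its own, and its structure is sound. Step 1's closed form is correct. Step 2 gives far terms $\lesssim (k-j)^{-2}$, which is exactly the kind of bound $|S_{k,n}(\theta)|\lesssim n^{-2}(\theta-\theta_k^{(n)}-\frac{\pi}{2n})^{-2}$ that the paper later quotes from Gr\"unwald in the proof of uniform convergence in Section 3. Your ratio bound also holds. From $\sin\frac{a+b}{2}\cos\frac{a-b}{2}=\frac{\sin a+\sin b}{2}$ you get $\sin\frac{a+b}{2}\ge\frac12(\sin a+\sin b)$ for $a,b\in[0,\pi]$. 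Near the endpoints, $\sin\theta\le\pi/n$ is compensated by $\sin\theta_k\ge\sin\frac{\pi}{2n}\ge\frac1n$.

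The error is in Step 3. The bound $|P_k(t)|\le 1$ is false, and so is the ``standard estimate'' $\bigl|\frac{\cos nt}{n(\cos t-\cos\theta_k)}\bigr|\le\frac{1}{\sin\theta_k}$, which is equivalent to it. For example, $P_1(0)=\frac{\sin\theta_1}{n(1-\cos\theta_1)}=\frac1n\cot\frac{\pi}{4n}$. This equals $\frac{1+\sqrt2}{2}$ for $n=2$ and tends to $\frac4\pi>1$.

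The route you sketch gives a constant of $2$, not $1$. Write $\cos nt=-2\sin\frac{n(t+\theta_k)}{2}\sin\frac{n(t-\theta_k)}{2}$ and $\cos t-\cos\theta_k=-2\sin\frac{t+\theta_k}{2}\sin\frac{t-\theta_k}{2}$. Apply $|\sin nx|\le n|\sin x|$ only to the difference factor and $|\sin|\le1$ to the other. This gives $|P_k(t)|\le\frac{\sin\theta_k}{\sin\frac{t+\theta_k}{2}}\le 2$ for $t\in[0,\pi]$, by the same half-angle inequality. The bound extends to all real $t$ because $P_k$ is even and $2\pi$-periodic; you need this, since $t_-$ can be negative and $t_+$ can exceed $\pi$.

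With $2$ in place of $1$, the at most three near indices contribute at most $6$, and the conclusion is unchanged. So this is a wrong constant rather than a structural gap, but as written Step 3 asserts something false and should be corrected.
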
 
Grünwald considered the following sequence of operators.
\begin{definition}\label{def}
	For $n=1,2,\ldots$, define $G_n:C[0,\pi]\rightarrow C[0,\pi]$ by 
	\[
	G_n(f)(\theta):=\frac{1}{2}\sum\limits_{k=1}^nf(\theta_k^{(n)})\{P_k(\theta-\frac{\pi}{2n})+P_k(\theta+\frac{\pi}{2n})\}.
	\]
\end{definition}
We refer to these interpolation operators as Gr\"unwald interpolation operators, or simply Gr\"unwald operators. These operators also satisfies, $\lim\limits_{n\to\infty}\|G_n(f)-f\|_\infty=0$ for all $f\in C[0,\pi]$.
 
In this article, we introduce a Durrmeyer-type variant of the Gr\"unwald interpolation operators on the space $L^p[0,\pi]$ for $1\leq p<+\infty$. Since the Lagrange interpolation operators are non-positive, so are the Gr\"unwald interpolation operators. It follows from the construction that the new sequence of operators is also non-positive. We first show that the operators are bounded in the space $C[0,\pi]$ and establish their convergence in this space. Moreover, we obtain a quantitative estimate for the rate of convergence using the modulus of continuity. We also prove the boundedness and convergence of these operators in the general space $L^p[0,\pi]$. For this purpose, we employ a fundamental interpolation theorem from harmonic analysis, namely the Riesz--Thorin interpolation theorem. We establish the convergence results in $L^p[0,\pi]$ by means of a non-positive Korovkin-type theorem given in \cite{vinaya}, which extends the corresponding positive version in \cite{zeren}.

This article is organized as follows. In the next section, we introduce a new sequence of operators on $L^p[0,\pi]$. Restricting our attention to the space $C[0,\pi]$, we establish the boundedness, convergence, and rate of convergence of these operators. In the subsequent section, we prove the boundedness and convergence of these operators in the space $L^p[0,\pi]$ for $1 \leq p < +\infty$. Using a suitable $K$-functional, we also obtain the rate of convergence in $L^p[0,\pi]$.

\section{Gr\"unwald--Durrmeyer Operators}
In this section, we introduce a new sequence of operators on $L^p[0,\pi]$ for $1\leq p\leq +\infty$. The operator we introduce is a Durrmeyer-type variant of the Gr\"unwald interpolation operators, similar to the Durrmeyer-type variant of the Bernstein operators. The sequence of operators are defined as follows.
\begin{definition}
	 Let 
	\[
	S_{k,n}(t)
	:=
	\frac{1}{2}
	\left(
	P_k\!\left(t - \frac{\pi}{2n}\right)
	+
	P_k\!\left(t + \frac{\pi}{2n}\right)
	\right),
	\quad k = 1,2,\dots,n.
	\]. 
	For $1\leq p\leq+\infty$, the sequence of operators $\mathcal{D}_n: L^p[0,\pi]\to L^p[0,\pi]$ is defined as
	\begin{equation}\label{durr}
		\mathcal{D}_n(f)(\theta)=\frac{n}{\pi}\sum\limits_{k=1}^nS_{k,n}(\theta)\int\limits_{0}^\pi f(t)S_{k,n}(t)\ dt
	\end{equation}
\end{definition}
We refer to these operators as Gr\"unwald--Durrmeyer operators. We show that $\mathcal{D}_n(1)=1$ for $n=1,2,\ldots$. 
 \begin{proposition}
 	$\mathcal{D}_n(1)=1$ for $n=1,2,\ldots$.
 \end{proposition}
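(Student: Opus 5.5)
The plan is to reduce the statement to two facts about the functions $S_{k,n}$: the partition of unity $\sum_{k=1}^n S_{k,n}(\theta)=1$, and the moment identity $\int_0^\pi S_{k,n}(t)\,dt=\frac{\pi}{n}$ for every $k=1,\dots,n$. Given both, the definition \eqref{durr} yields
\[
\mathcal{D}_n(1)(\theta)=\frac{n}{\pi}\sum_{k=1}^n S_{k,n}(\theta)\cdot\frac{\pi}{n}=\sum_{k=1}^n S_{k,n}(\theta)=1 .
\]

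For the partition of unity, note that $P_k(t)=\ell_k(\cos t)$, where $\ell_k$ is the $k$-th fundamental Lagrange polynomial on the Chebyshev nodes. Lagrange interpolation reproduces constants, so $\sum_k \ell_k(x)=1$ for all $x$. Hence $\sum_k P_k(t)=1$ for every real $t$, since $P_k$ is a $2\pi$-periodic even function of $t$. Applying this at $t=\theta\pm\frac{\pi}{2n}$ and averaging gives $\sum_k S_{k,n}(\theta)=1$.

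For the moment identity, I would use the standard Chebyshev expansion of the fundamental polynomials,
\[
P_k(t)=\frac{1}{n}\Big(1+2\sum_{j=1}^{n-1}\cos(j\theta_k^{(n)})\cos(jt)\Big).
\]
This follows from the discrete orthogonality of $T_0,\dots,T_{n-1}$ on the nodes $x_k$, and I would verify or cite it. With $h=\frac{\pi}{2n}$, integrating term by term gives
\[
\int_0^\pi P_k(t+h)\,dt=\frac{\pi}{n}+\frac{2}{n}\sum_{j=1}^{n-1}\cos(j\theta_k^{(n)})\,\frac{\sin(j(\pi+h))-\sin(jh)}{j}.
\]
The shift $-h$ gives the same expression with $h$ replaced by $-h$.

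Adding the two shifts, the bracket becomes $\sin(j(\pi+h))+\sin(j(\pi-h))-\sin(jh)+\sin(jh)=2\sin(j\pi)\cos(jh)=0$. Therefore $\int_0^\pi S_{k,n}(t)\,dt=\frac12\cdot\frac{2\pi}{n}=\frac{\pi}{n}$.

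The main obstacle is this moment computation, in particular justifying the cosine expansion of $P_k$ and handling the shifted arguments, which leave $[0,\pi]$. Both issues are resolved by treating $P_k$ as a trigonometric polynomial on all of $\mathbb{R}$. Notably, neither shift alone integrates to $\pi/n$; only the symmetric average cancels the boundary terms.
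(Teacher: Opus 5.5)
Your proof is correct. It uses the same two ingredients as the paper, $\sum_k S_{k,n}\equiv 1$ and $\int_0^\pi S_{k,n}(t)\,dt=\pi/n$, but reaches the moment identity by a different route.

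The paper keeps the closed form of $P_k$ and substitutes $u=t\mp\frac{\pi}{2n}$. It then applies the classical identity $\int_0^\pi \frac{\cos(nu)}{\cos u-\cos\theta}\,du=\pi\frac{\sin(n\theta)}{\sin\theta}$ at $\theta=\theta_k^{(n)}$, where $\sin^2(n\theta_k^{(n)})=1$.

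You instead expand $P_k$ in the cosine basis via discrete orthogonality and integrate a finite trigonometric polynomial. No singular integrals are needed, and the mechanism is transparent: $\cos(j(t+h))+\cos(j(t-h))=2\cos(jh)\cos(jt)$ integrates to $0$ over $[0,\pi]$ for every $j\ge 1$ and every shift $h$.

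Your computation also exposes a slip in the paper. It claims ``by periodicity'' that each shifted integral $I_1$, $I_2$ separately equals $\int_0^\pi\frac{\cos(nu)}{\cos u-\cos\theta}\,du$. That is false in general: for $n=2$, your formula gives $\int_0^\pi P_1(t-\frac{\pi}{4})\,dt=\frac{\pi}{2}+1\neq\frac{\pi}{2}=\int_0^\pi P_1(t)\,dt$. Only the identity $I_1+I_2=2\int_0^\pi\frac{\cos(nu)}{\cos u-\cos\theta}\,du$ holds, because the end pieces $\int_0^{\alpha}$ and $\int_{\pi-\alpha}^{\pi}$ cancel between the two shifts by evenness and $2\pi$-periodicity of the kernel.

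You also prove $\sum_k S_{k,n}=1$ via $P_k(t)=\ell_k(\cos t)$ and reproduction of constants, which the paper only asserts.

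One minor caveat: your closing remark that neither shift alone integrates to $\pi/n$ holds only generically. It fails for the middle node $\theta_k^{(n)}=\frac{\pi}{2}$ when $n$ is odd, including $n=1$, where $P_1\equiv 1$. There $\cos(j\theta_k^{(n)})=0$ for all odd $j$, and the boundary terms vanish anyway.
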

 \begin{proof}
 	First, we claim that $\displaystyle{
 		\int\limits_{0}^\pi S_{k,n}(t)\ dt=\frac{\pi}{n}.
 		}$ To prove this, let $t\in [0,\pi]$. By the definition of $S_{k,n}$, we have 
 		\[
 		S_{k,n}(t)
 		=
 		\frac{(-1)^{k+1}\sin \theta_k^{(n)}}{2n}
 		\left[
 		\frac{\cos\!\left(n\left(t - \frac{\pi}{2n}\right)\right)}
 		{\cos\!\left(t - \frac{\pi}{2n}\right) - \cos \theta_k^{(n)}}
 		+
 		\frac{\cos\!\left(n\left(t + \frac{\pi}{2n}\right)\right)}
 		{\cos\!\left(t + \frac{\pi}{2n}\right) - \cos \theta_k^{(n)}}
 		\right].
 		\]
 		Let $\theta\in [0,\pi]$, with $\theta\neq t$. Define
 		\[
 		H_n(t,\theta)
 		=
 		\frac{\sin(n\theta)\,\sin \theta}{2n}
 		\left[
 		\frac{\cos\!\left(n\left(t - \frac{\pi}{2n}\right)\right)}
 		{\cos\!\left(t - \frac{\pi}{2n}\right) - \cos \theta}
 		+
 		\frac{\cos\!\left(n\left(t + \frac{\pi}{2n}\right)\right)}
 		{\cos\!\left(t + \frac{\pi}{2n}\right) - \cos \theta}
 		\right].
 		\]
 		We compute
 		\[
 		I := \int\limits_0^\pi H_n(t,\theta)\,dt.
 		\]
 		Let $\alpha = \frac{\pi}{2n}$. Then $I = \frac{\sin(n\theta)\sin\theta}{2n}(I_1 + I_2),$
 		where
 		\[
 		I_1 = \int\limits_0^\pi \frac{\cos(n(t-\alpha))}{\cos(t-\alpha)-\cos\theta}\,dt,\quad I_2 = \int\limits_0^\pi \frac{\cos(n(t+\alpha))}{\cos(t+\alpha)-\cos\theta}\,dt.
 		\]
 		For $I_1$, let $u = t - \alpha$. Then, we get
 		$I_1 = \displaystyle{\int\limits_{-\alpha}^{\pi-\alpha} \frac{\cos(nu)}{\cos u - \cos\theta}\,du.}$
 		For $I_2$, let $u = t + \alpha$. So that,
 		$I_2 =\displaystyle{\int\limits_{\alpha}^{\pi+\alpha} \frac{\cos(nu)}{\cos u - \cos\theta}\,du.}$
 		
 		Using periodicity of $\cos(nu)$ and the kernel, both reduce to
 		\[
 		I_1 = I_2 = \int\limits_0^\pi \frac{\cos(nu)}{\cos u - \cos\theta}\,du.
 		\]
 		We use the following known identity.
 		\[
 		\int\limits_0^\pi \frac{\cos(nu)}{\cos u - \cos\theta}\,du
 		=
 		\frac{\pi \sin(n\theta)}{\sin\theta},\ \theta\neq 0,\pi.
 		\]
 		Hence, we have 
 		\[
 		I = \frac{\sin(n\theta)\sin\theta}{2n}
 		\cdot(I_1+I_2)=\frac{\sin(n\theta)\sin\theta}{2n}
 		\cdot
 		2 \cdot \frac{\pi \sin(n\theta)}{\sin\theta}.
 		\]
 		
 		Canceling $\sin\theta$ ($\theta\neq 0,\pi$), we obtain
 		$I = \frac{\pi}{n}\sin^2(n\theta).$
 		Finally, we have 
 		\[
 		\int\limits_0^\pi H_n(t,\theta)\,dt
 		=
 		\frac{\pi}{n}\sin^2(n\theta).
 		\]
 		Now, we obtain
 		\[
 		\int\limits_{0}^\pi S_{k,n}(t)\ dt=\int\limits_0^\pi H_n\!\big(t,\theta_k^{(n)}\big)\,dt
 		=
 		\frac{\pi}{n}.
 		\]
 	By the Definition (\ref{durr}), we have
 	\begin{equation*}
 		\mathcal{D}_n(f)(\theta)=\frac{n}{\pi}\sum\limits_{k=1}^nS_{k,n}(\theta)\int\limits_{0}^\pi S_{k,n}(t)\ dt.
 	\end{equation*}
 	Since $\sum\limits_{k=1}^nS_{k,n}(\theta)=1$ for all $\theta\in [0,\pi]$, the result follows.
 \end{proof}
 \section{Convergence in the space $C[0,\pi]$}
 In this section, we prove the uniform boundedness and convergence of the sequence $\{\mathcal{D}_n\}_{n\in\mathbb{N}}$ in the space $C[0,\pi]$. Moreover, we obatain a quantitative estimate for the convergence of this sequence using the modulus of continuity. We begin by establishing the boundedness property.
\begin{lemma}\label{bdd}
	The sequence $\{\mathcal{D}_n\}_{n\in\mathbb{N}}$ is bounded uniformly in $C[0,\pi]$.
\end{lemma}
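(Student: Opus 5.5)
We will show that there is an absolute constant $M>0$ such that $\|\mathcal{D}_n(f)\|_\infty\leq M\|f\|_\infty$ for every $f\in C[0,\pi]$ and every $n\in\mathbb{N}$. The route is to estimate the inner integrals crudely and reduce everything to Grünwald's Lemma~\ref{lem}. With that lemma in hand, the whole difficulty is concentrated in a single integral estimate.

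Fix $\theta\in[0,\pi]$. By the triangle inequality,
\[
|\mathcal{D}_n(f)(\theta)|\leq \frac{n}{\pi}\sum_{k=1}^n |S_{k,n}(\theta)|\int_0^\pi |f(t)|\,|S_{k,n}(t)|\,dt
\leq \|f\|_\infty\,\frac{n}{\pi}\Big(\max_{1\leq k\leq n}\int_0^\pi |S_{k,n}(t)|\,dt\Big)\sum_{k=1}^n|S_{k,n}(\theta)|.
\]
Lemma~\ref{lem} states exactly that $\sum_{k=1}^n|S_{k,n}(\theta)|<c_1$ uniformly in $\theta$ and $n$. It therefore suffices to prove the key estimate
\[
\int_0^\pi |S_{k,n}(t)|\,dt\leq \frac{C}{n}\quad\text{for all } k=1,\dots,n,\ n\in\mathbb{N},
\]
with $C$ absolute. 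This would give $M=c_1C/\pi$.

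This $L^1$ bound on the fundamental functions is the main obstacle. The identity $\int_0^\pi S_{k,n}=\pi/n$ from the proposition does not control the absolute value, so a separate argument is needed. My plan is to work with each translate $P_k(t\mp\frac{\pi}{2n})$ separately and bound $\int |P_k(u)|\,du$ over a period window. Write $\cos u-\cos\theta_k=-2\sin\frac{u+\theta_k}{2}\sin\frac{u-\theta_k}{2}$. Since $\cos(nu)$ vanishes at $u=\theta_k$, we can also use $\cos(nu)=\pm\sin(n(u-\theta_k))$. Hence
\[
|P_k(u)|=\frac{\sin\theta_k\,|\sin(n(u-\theta_k))|}{2n\,|\sin\frac{u+\theta_k}{2}|\,|\sin\frac{u-\theta_k}{2}|}.
\]
We split the range into two regions.

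\textbf{Near region, $|u-\theta_k|\leq \pi/n$.} Use $|\sin(n(u-\theta_k))|\leq n|u-\theta_k|$ together with $|\sin\frac{u-\theta_k}{2}|\geq |u-\theta_k|/\pi$. Near $\theta_k$ we also have $\sin\theta_k/|\sin\frac{u+\theta_k}{2}|\leq C'$; the endpoints need care here, but since $\theta_k\geq \pi/(2n)$ the ratio stays bounded. This gives $|P_k|\leq C''$ on an interval of length $O(1/n)$, so the contribution is $O(1/n)$.

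\textbf{Far region, $|u-\theta_k|>\pi/n$.} Here $|P_k(u)|\lesssim \frac{\sin\theta_k}{n\,|\sin\frac{u+\theta_k}{2}|\,|u-\theta_k|}$. The difficulty is that integrating $1/|u-\theta_k|$ naively produces a $\log n/n$ bound, which is too weak. To remove the logarithm I would not integrate absolute values of both translates independently. Instead I would exploit the structure of $S_{k,n}$ used in Grünwald's proof of Lemma~\ref{lem}. Shifting by $\pm\pi/(2n)$ turns $\cos(n(t\mp\alpha))$ into $\pm\sin(nt)$, so
\[
S_{k,n}(t)=\frac{(-1)^{k+1}\sin\theta_k\,\sin(nt)}{2n}\Big[\frac{1}{\cos(t-\alpha)-\cos\theta_k}-\frac{1}{\cos(t+\alpha)-\cos\theta_k}\Big].
\]
The bracket is a difference quotient of order $\alpha\,\sin t/(\cos t-\cos\theta_k)^2$ away from $\theta_k$. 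This yields $|S_{k,n}(t)|\lesssim \frac{\sin\theta_k\sin t}{n^2(\cos t-\cos\theta_k)^2}$ in the far region. That bound behaves like $\frac{1}{n^2|t-\theta_k|^2}$ after handling the $\sin$ factors, and it integrates to $O(1/n)$ over $|t-\theta_k|>\pi/n$. I expect the most delicate bookkeeping to be near the endpoints $t\approx 0,\pi$ and when $\theta_k$ is close to $0$ or $\pi$. In those cases the factors $\sin\theta_k\sin t$ must be compared with $\sin\frac{t+\theta_k}{2}\,\sin\frac{|t-\theta_k|}{2}$, using $\sin\theta_k\sin t\leq 4\sin^2\frac{t+\theta_k}{2}$, which keeps the estimate uniform. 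Combining the two regions gives the key estimate and hence uniform boundedness.
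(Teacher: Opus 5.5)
Your top-level argument is the same as the paper's. Both use the triangle inequality, Gr\"unwald's bound $\sum_{k}|S_{k,n}(\theta)|<c_1$ from Lemma~\ref{lem}, and the estimate $\int_0^\pi|S_{k,n}(t)|\,dt\le C/n$. The paper does not prove that last estimate; it quotes Proposition~3.1 of \cite{GK}. Proving it yourself is a reasonable addition, and your far-region part is sound:
\begin{itemize}
\item the bracket equals exactly $-2\sin\alpha\,\sin t\,/\,[(\cos(t-\alpha)-\cos\theta_k)(\cos(t+\alpha)-\cos\theta_k)]$;
\item each shifted factor is comparable to $\cos t-\cos\theta_k$ once $|t-\theta_k|>2\alpha$, which is where $\alpha\le\theta_k\le\pi-\alpha$ is used;
\item $\sin\theta_k\sin t\le\sin^2\frac{t+\theta_k}{2}$ then gives $|S_{k,n}(t)|\le C/(n^2(t-\theta_k)^2)$, which integrates to $O(1/n)$.
\end{itemize}

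The near-region step, however, is wrong as written. For $t<\frac{\pi}{2n}$ the shifted argument $u=t-\frac{\pi}{2n}$ is negative. For $k=1$ you then get $\frac{u+\theta_1}{2}=\frac t2$, so the ratio $\sin\theta_1/|\sin\frac{u+\theta_1}{2}|$ blows up as $t\to0$. Your resulting bound $|P_1(u)|\le\frac{\pi\sin\theta_1}{2\sin(t/2)}$ is not even integrable at $t=0$, and the same happens for $k=n$ at $t=\pi$. The fact that $\theta_k\ge\frac{\pi}{2n}$ does not prevent this.

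The repair is short. $P_k$ is even and $2\pi$-periodic, so it suffices to bound it for $u\in[0,\pi]$. There $\frac{u+\theta_k}{2}\in[\frac{\theta_k}{2},\frac{\pi+\theta_k}{2}]$, so $\sin\frac{u+\theta_k}{2}\ge\min(\sin\frac{\theta_k}{2},\cos\frac{\theta_k}{2})\ge\frac12\sin\theta_k$. Combined with your other two inequalities, this gives $|P_k(u)|\le\pi$ for every real $u$. That settles the near region at once, and also removes the small mismatch between your $u$-window $|u-\theta_k|\le\pi/n$ and the $t$-window $|t-\theta_k|\le\pi/n$.
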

\begin{proof}
	\begin{align*}
		|\mathcal{D}_n(f)(\theta)|&\leq n\sum\limits_{k=1}^n|S_{k,n}(\theta)|\int\limits_{0}^\pi |f(t)||S_{k,n}(t)|\ dt\\
		&\leq n\|f\|_\infty\sum\limits_{k=1}^n|S_{k,n}(\theta)|\int\limits_{0}^\pi|S_{k,n}(t)|\ dt
	\end{align*}
	From Proposition $3.1$ in \cite{GK}, we have $\int\limits_{0}^\pi|S_{k,n}(t)|\ dt\leq \frac{M}{n}$ for some absolute constant $M>0$. Using this fact and by Lemma \ref{lem}, we have $|\mathcal{D}_n(f)(\theta)|\leq C\|f\|_\infty$ for some constant $C>0$ which does not depend on $n, f$ or $\theta$. Taking supremum over all $\theta\in [0,\pi]$, we have the result.
\end{proof}
The following theorem establishes a convergence result of the sequence of operators $\{\mathcal{D}_n\}_{n\in\mathbb{N}}$ in the space $C[0,\pi]$.
\begin{theorem}\label{main}
	Let $f\in C[0,\pi]$. Then,
	\[
	\lim\limits_{n\to\infty}\|\mathcal{D}_n(f)-f\|_\infty=0
	\]
\end{theorem}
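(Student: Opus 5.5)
Since $\mathcal{D}_n$ is linear, reproduces constants ($\mathcal{D}_n(1)=1$ by the Proposition above) and is uniformly bounded on $C[0,\pi]$ by Lemma \ref{bdd}, I would write, for fixed $\theta\in[0,\pi]$,
\[
\mathcal{D}_n(f)(\theta)-f(\theta)=\frac{n}{\pi}\sum_{k=1}^n S_{k,n}(\theta)\int_0^\pi \bigl(f(t)-f(\theta)\bigr)S_{k,n}(t)\,dt ,
\]
and then insert the node value $f(\theta_k^{(n)})$ inside the integral. Using $\int_0^\pi S_{k,n}(t)\,dt=\pi/n$ (established in the proof of the Proposition), this gives the splitting
\[
\mathcal{D}_n(f)(\theta)-f(\theta)=\bigl(G_n(f\circ\cos^{-1}\!\circ\cos)(\theta)-f(\theta)\bigr)+\frac{n}{\pi}\sum_{k=1}^n S_{k,n}(\theta)\int_0^\pi \bigl(f(t)-f(\theta_k^{(n)})\bigr)S_{k,n}(t)\,dt .
\]
In plain terms, the first term is the Gr\"unwald operator $G_n(f)(\theta)-f(\theta)$ of Definition \ref{def}, and it tends to $0$ uniformly by Theorem \ref{grun}, since $f\in C[0,\pi]$ corresponds to $f\circ\arccos\in C[-1,1]$. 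So everything reduces to showing that the second term $R_n(\theta)$ tends to $0$ uniformly.

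To estimate $R_n$, I would use the uniform bound $\sum_k |S_{k,n}(\theta)|\le c_1$ from Lemma \ref{lem}. It then suffices to show
\[
\max_{k} \; n\int_0^\pi |f(t)-f(\theta_k^{(n)})|\,|S_{k,n}(t)|\,dt \longrightarrow 0 .
\]
For this I would split the integral at $|t-\theta_k^{(n)}|<\delta$. On the near part, the bound $n\int_0^\pi|S_{k,n}|\le M$ from Proposition~3.1 of \cite{GK} gives a contribution of at most $M\,\omega(f,\delta)$. On the far part, I would bound $|f(t)-f(\theta_k^{(n)})|\le 2\|f\|_\infty$ and need
\[
n\int_{|t-\theta_k|\ge\delta}|S_{k,n}(t)|\,dt\to0
\]
uniformly in $k$. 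Using the explicit form of $S_{k,n}$, write $\cos(t\mp\alpha)-\cos\theta_k = -2\sin\frac{t\mp\alpha+\theta_k}{2}\sin\frac{t\mp\alpha-\theta_k}{2}$. Away from the node, the denominator is then bounded below by roughly $\delta\cdot\frac{t+\theta_k}{2}$-type factors, while the numerator contains $\sin\theta_k/n$. This should give $|S_{k,n}(t)|\lesssim \frac{1}{n^2\delta^2}$ away from the node, or $\frac{1}{n^{2}}\cdot\frac{1}{(t-\theta_k)^2}$ after exploiting the cancellation between the two shifted terms, as in Gr\"unwald's proof of Lemma \ref{lem}. The far contribution would then be $O(1/(n\delta))$, which tends to $0$. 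Choosing $\delta$ small first and then $n$ large finishes the argument.

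The main obstacle is this tail estimate. A single $P_k$ only decays like $\frac{1}{n|t-\theta_k|}$, which gives $n\int|P_k|\sim\log n$; this is exactly why Lagrange interpolation fails. The decay has to come from the averaging in $S_{k,n}$. Because $\cos(n(t-\alpha))=\sin(nt)$ and $\cos(n(t+\alpha))=-\sin(nt)$, one has
\[
S_{k,n}(t)=\frac{(-1)^{k+1}\sin\theta_k\sin(nt)}{2n}\Bigl[\frac{1}{\cos(t-\alpha)-\cos\theta_k}-\frac{1}{\cos(t+\alpha)-\cos\theta_k}\Bigr],
\]
and the bracket is a difference quotient of size about $\alpha\cdot|\partial_t(\cos t-\cos\theta_k)^{-1}|$. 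This yields the $\frac{1}{n^2(t-\theta_k)^2}$ decay, with extra care near the endpoints $0$ and $\pi$, where the factor $\sin\theta_k$ compensates. Alternatively, if this becomes messy, I would take the density route: the estimate above is only needed for a dense class, and by Lemma \ref{bdd} plus Banach--Steinhaus it then suffices to check the convergence on, say, trigonometric polynomials in $\cos$, where it could be computed directly.
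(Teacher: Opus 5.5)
Your proposal is correct and follows essentially the same route as the paper. You insert $f(\theta_k^{(n)})$ and split the integral term at $|t-\theta_k^{(n)}|\le\delta$. You control it with $n\int_0^\pi|S_{k,n}|\le M$, Lemma~\ref{lem}, and Gr\"unwald's $O\bigl(n^{-2}(t-\theta_k^{(n)})^{-2}\bigr)$ tail decay, which your difference-quotient formula for $S_{k,n}$ correctly reproduces. The one refinement is that you keep the sampling term signed, so it is exactly $G_n(f)(\theta)-f(\theta)$ and Theorem~\ref{grun} applies verbatim. The paper instead uses $E_2=\sum_k|f(\theta_k^{(n)})-f(\theta)|\,|S_{k,n}(\theta)|$ with absolute values, and its unproved bound $E_2\le C_3\epsilon$ needs the kernel decay again rather than Gr\"unwald's theorem alone.
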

\begin{proof}
	Let $\theta\in [0,\pi]$ and $f\in C[0,\pi]$. Then, $f(\theta)$ can be expressed as 
	\[
	f(\theta)=\frac{n}{\pi}\sum\limits_{k=1}^nS_{k,n}(\theta)\int\limits_{0}^\pi f(\theta)S_{k,n}(t)\ dt.
	\]
	Now, we have
	\begin{align*}
		|\mathcal{D}_n(f)(\theta)-f(\theta)|&\leq |\frac{n}{\pi}\sum\limits_{k=1}^nS_{k,n}(\theta)\int\limits_{0}^\pi f(t)S_{k,n}(t)\ dt-f(\theta)|\\
		&\leq |\frac{n}{\pi}\sum\limits_{k=1}^nS_{k,n}(\theta)\int\limits_{0}^\pi f(t)-f(\theta_k^{(n)})S_{k,n}(t)\ dt|+\\
		&|\frac{n}{\pi}\sum\limits_{k=1}^nS_{k,n}(\theta)\int\limits_{0}^\pi f(\theta_k^{(n)})-f(\theta)S_{k,n}(t)\ dt|\\
		&\leq \frac{n}{\pi}\sum\limits_{k=1}^n|S_{k,n}(\theta)|\int\limits_{0}^\pi |f(t)-f(\theta_k^{(n)})||S_{k,n}(t)|\ dt+\\
		&\frac{n}{\pi}\sum\limits_{k=1}^n|f(\theta_k^{(n)})-f(\theta)|S_{k,n}(\theta)||\int\limits_{0}^\pi S_{k,n}(t)\ dt|\\
		&=\frac{n}{\pi}\sum\limits_{k=1}^n|S_{k,n}(\theta)|\int\limits_{0}^\pi |f(t)-f(\theta_k^{(n)})||S_{k,n}(t)|\ dt+\\
		&\sum\limits_{k=1}^n|f(\theta_k^{(n)})-f(\theta)|S_{k,n}(\theta)|\\
		&=E_1+E_2.
	\end{align*}
	Consider $E_1$.
	Since $f\in C[0,\pi]$, it is uniformly continuous. Therefore, for each $\epsilon>0$, there exists a $\delta>0$, such that $|f(\theta_k^{(n)})-f(t)|<\epsilon$ whenever $|\theta_k^{(n)}-t|\leq \delta $ for all $k=1,2,\ldots, n$.
	Thus, we have 
	\begin{align*}
		E_1&=\frac{n}{\pi}\sum\limits_{k=1}^n|S_{k,n}(\theta)|\int\limits_{0}^\pi |f(t)-f(\theta_k^{(n)})||S_{k,n}(t)|\ dt\\
		&\leq \frac{n}{\pi}\sum\limits_{k=1}^n|S_{k,n}(\theta)|\int\limits_{|t-\theta_k^{(n)}|\leq \delta} |f(t)-f(\theta_k^{(n)})||S_{k,n}(t)|\ dt+\\
		&\quad \frac{n}{\pi}\sum\limits_{k=1}^n|S_{k,n}(\theta)|\int\limits_{|t-\theta_k^{(n)}|>\delta} |f(t)-f(\theta_k^{(n)})||S_{k,n}(t)|\ dt\\
		&\leq \epsilon \frac{n}{\pi}\sum\limits_{k=1}^n|S_{k,n}(\theta)|\int\limits_{|t-\theta_k^{(n)}|\leq \delta} |S_{k,n}(t)|\ dt+\\
		&\qquad \frac{n}{\pi}2\|f\|_\infty\sum\limits_{k=1}^n|S_{k,n}(\theta)|\int\limits_{|t-\theta_k^{(n)}|>\delta} |S_{k,n}(t)|\ dt\\
		&\leq \epsilon \frac{n}{\pi}\sum\limits_{k=1}^n|S_{k,n}(\theta)|\int\limits_{|t-\theta_k^{(n)}|\leq \delta} |S_{k,n}(t)|\ dt+\\
		&\qquad \frac{n}{\pi}2M\sum\limits_{k=1}^n|S_{k,n}(\theta)|\int\limits_{|t-\theta_k^{(n)}|>\delta} |S_{k,n}(t)|\ dt\\
		&\leq C_1\epsilon + \frac{n}{\pi}2M\sum\limits_{k=1}^n|S_{k,n}(\theta)|\int\limits_{|t-\theta_k^{(n)}|>\delta} |S_{k,n}(t)|\ dt,
	\end{align*}
	where $\|f\|_\infty\leq M$ for some $M>0$. 
	
	For $|t-\theta_k^{(n)}|>\delta$ from \cite{grunwald}, we have 
	\begin{align*}
		|S_{k,n}(t)|&\leq \frac{\pi^3}{4n^2}\frac{1}{(\theta-\theta_k^{(n)}-\frac{\pi}{2n})^2}\\
		&\leq \frac{\pi^3}{4n^2}\frac{1}{(\delta-\frac{\pi}{2n})^2}\\
		&=\frac{\pi^3}{4n^2}\frac{4n^2}{(2n\delta-\pi)^2}
		=\mathcal{O}(\frac{1}{n^2}).
	\end{align*}
   Finally, we obtain $E_1\leq C_1\epsilon +C_2 \frac{1}{n}\leq (C_1+C_2)\epsilon,$ for sufficiently large $n$.
	And, we also have 
	\[
	E_2=\sum\limits_{k=1}^n|f(\theta_k^{(n)})-f(\theta)||S_{k,n}(\theta)|\leq C_3\epsilon.
	\]
	Thus, we have $|\mathcal{D}_n(f)(\theta)-f(\theta)|\leq C\epsilon,$ for sufficiently large $n$,
	where $C_4=C_1+C_2+C_3$, since $\epsilon$ is independent of $\theta$, taking supremum over all $\theta\in [0,\pi]$, we have 
	$\|\mathcal{D}_n(f)-f\|_\infty\leq C_4\epsilon$. Since $\epsilon>0$ is arbitrary, we have $\lim\limits_{n\to\infty}\|\mathcal{D}_n(f)-f\|_\infty=0.$
\end{proof}
In what follows, we obtain a quantitative estimate for the convergence of the sequence $\{\mathcal{D}_n\}_{n\in\mathbb{N}}$ in the space $C[0,\pi]$.
\begin{theorem}
	Let $f\in C[0,\pi]$. Then 
	\[
	|\mathcal{D}_n(f)(\theta)-f(\theta)| =\mathcal{O}(\omega(f,\delta_n(\theta))+\omega(f\circ \arccos,\frac{\sqrt{1-x^2}}{n})+\omega(f\circ \arccos, \frac{1}{n^2}),
	\]
	where $x=\cos\theta$.
\end{theorem}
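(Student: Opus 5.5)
We reuse the splitting from the proof of Theorem \ref{main}. Since $\frac{n}{\pi}\int_0^\pi S_{k,n}(t)\,dt=1$ for every $k$, we can write
\[
\mathcal{D}_n(f)(\theta)-f(\theta)=\frac{n}{\pi}\sum_{k=1}^n S_{k,n}(\theta)\int_0^\pi \bigl(f(t)-f(\theta_k^{(n)})\bigr)S_{k,n}(t)\,dt+\bigl(G_n(f)(\theta)-f(\theta)\bigr)=:E_1+E_2 .
\]
The term $E_2$ is exactly the error of the classical Gr\"unwald operator of Definition \ref{def}. For $E_2$ we would invoke the known pointwise quantitative estimate for Gr\"unwald interpolation on Chebyshev nodes, written in the variable $x=\cos\theta$ with $g=f\circ\arccos\in C[-1,1]$:
\[
|G_n(f)(\theta)-f(\theta)|=\mathcal{O}\Bigl(\omega\bigl(g,\tfrac{\sqrt{1-x^2}}{n}\bigr)+\omega\bigl(g,\tfrac{1}{n^2}\bigr)\Bigr).
\]
This is the Timan-type estimate that comes out of Gr\"unwald's kernel bounds. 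If we have to rederive it, we would bound $|g(x_k)-g(x)|\le \omega(g,|x_k-x|)\le (1+|x-x_k|/h)\,\omega(g,h)$ with $h=\sqrt{1-x^2}/n+1/n^2$. We would then use $|x-x_k|\lesssim |\theta-\theta_k|(\sin\theta+|\theta-\theta_k|)$, together with the decay $|S_{k,n}(\theta)|\lesssim n^{-2}(\theta-\theta_k^{(n)}\mp\frac{\pi}{2n})^{-2}$ quoted from \cite{grunwald}. This makes $\sum_k |S_{k,n}(\theta)|\,|x-x_k|/h$ bounded.

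For $E_1$ we would define
\[
\delta_n(\theta):=\frac{n}{\pi}\sum_{k=1}^n |S_{k,n}(\theta)|\int_0^\pi |t-\theta_k^{(n)}|\,|S_{k,n}(t)|\,dt,
\]
which is the natural first absolute moment of the kernel and should be the quantity meant in the statement. We then use $|f(t)-f(\theta_k^{(n)})|\le \bigl(1+|t-\theta_k^{(n)}|/\delta\bigr)\omega(f,\delta)$ and choose $\delta=\delta_n(\theta)$. Together with Lemma \ref{lem} and the bound $\int_0^\pi |S_{k,n}|\le M/n$ from \cite{GK}, this gives $|E_1|\le (C+1)\,\omega(f,\delta_n(\theta))$.

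Adding the two bounds yields the stated estimate. The main obstacle is $E_2$: obtaining the sharp Timan-type local bound requires the careful kernel estimates of Gr\"unwald. In particular, one must control $\sum_k |S_{k,n}(\theta)|\,|\theta-\theta_k|^2$, which is only logarithmically bounded. Because of this, we would use the $\omega(g,\cdot)$ formulation, which exploits the factor $\sin\theta$, rather than $\omega(f,\cdot)$ in $\theta$. To show the estimate is not vacuous, we would also verify that $\delta_n(\theta)\to0$ uniformly, for example $\delta_n=\mathcal{O}(\log n/n)$. This follows by splitting the integral at $|t-\theta_k^{(n)}|\le 1/n$ and using the same $n^{-2}(t-\theta_k^{(n)})^{-2}$ decay.
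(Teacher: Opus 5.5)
Your treatment of $E_1$ is exactly the paper's: same $\delta_n(\theta)$, same $(1+|t-\theta_k^{(n)}|/\delta)\,\omega(f,\delta)$ device. Your signed splitting, with $E_2=G_n(f)(\theta)-f(\theta)$ exactly, is better than the paper's. The paper puts absolute values inside and simply asserts the Timan-type bound for $\sum_k|f(\theta_k^{(n)})-f(\theta)|\,|S_{k,n}(\theta)|$.

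\textbf{The gap is the $E_2$ step.} You cite the estimate as known without a reference, and your fallback derivation fails: the claim that $\sum_k|S_{k,n}(\theta)|\,|x-x_k|/h$ is bounded is false. From the definition one gets the closed form
\[
S_{k,n}(t)=(-1)^k\,\frac{\sin(nt)\,\sin t\,\sin\theta_k^{(n)}\,\sin\frac{\pi}{2n}}{n\bigl(\cos(t-\frac{\pi}{2n})-\cos\theta_k^{(n)}\bigr)\bigl(\cos(t+\frac{\pi}{2n})-\cos\theta_k^{(n)}\bigr)}.
\]
Take $n$ odd and $\theta=\pi/2$ (so $x=0$), and write $\theta_k^{(n)}=\frac{\pi}{2}+\frac{m\pi}{n}$. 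Then $|S_{k,n}(\pi/2)|\,|x_k|\ge\frac{c}{n|m|}$ for $1\le|m|\le n/4$. Hence $\sum_k|S_{k,n}(\pi/2)|\,|x_k|\ge c'\frac{\log n}{n}$, while $h\asymp\frac1n$.

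The factor $\sin\theta$ only helps near the endpoints. In the interior the logarithm comes from the first moment $\sum_k|S_{k,n}(\theta)|\,|\theta-\theta_k^{(n)}|\asymp\frac{\log n}{n}$. The second moment is only $\mathcal{O}(1/n)$, so your remark puts the difficulty in the wrong place. Taking $g(y)=|y|$ shows that no argument that puts absolute values on each $g(x_k)-g(x)$ can give the bound; it must lose a factor $\log n$. The same example shows that the paper's assertion for its absolute-value $E_2$ is false as written, so keeping the sign was the right instinct.

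What is missing is an argument that exploits cancellation in the signed error. One route:
\begin{itemize}
\item $L_n$ reproduces algebraic polynomials of degree $\le n-1$. So if $Q(\phi)=q(\cos\phi)$ with $\deg q\le n-1$, then $G_n(Q)(\theta)-Q(\theta)=\frac12\bigl[Q(\theta+\alpha)+Q(\theta-\alpha)-2Q(\theta)\bigr]$ with $\alpha=\frac{\pi}{2n}$. This second symmetric difference is bounded by $\alpha^2\max_{|\phi-\theta|\le\alpha}|Q''(\phi)|$.
\item Let $Q$ be a Jackson-type convolution (degree $\le n-1$, high enough order) of $F(\phi)=g(\cos\phi)$, $\phi\in\mathbb{R}$. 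Put $\Delta_n(y)=\frac{\sqrt{1-y^2}}{n}+\frac{1}{n^2}$. Using $|\cos(\phi-u)-\cos\phi|\le|u|\,|\sin\phi|+u^2$, one gets $|F-Q|(\phi)\lesssim\omega(g,\Delta_n(\cos\phi))$ and $|Q''(\phi)|\lesssim n^2\,\omega(g,\Delta_n(\cos\phi))$.
\item Split $G_n(f)-f=G_n(f-Q)+(G_nQ-Q)+(Q-f)$. The last two terms are $\mathcal{O}(\omega(g,\Delta_n(x)))$.
\item For the first term, the $n^{-2}(\theta-\theta_k^{(n)})^{-2}$ decay alone is not enough. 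You need the sharper bound contained in the closed form:
\[
|S_{k,n}(\theta)|\lesssim\frac{\sin\theta\,\sin\theta_k^{(n)}}{\sin^2\frac{\theta+\theta_k^{(n)}}{2}}\cdot\frac{1}{n^2(\theta-\theta_k^{(n)})^2},\qquad |\theta-\theta_k^{(n)}|\ge\frac{\pi}{n}.
\]
It gives $\sum_k|S_{k,n}(\theta)|\,\Delta_n(x_k)/\Delta_n(x)=\mathcal{O}(1)$, hence $|G_n(f-Q)(\theta)|\lesssim\omega(g,\Delta_n(x))$.
\end{itemize}
Since $\omega(g,\Delta_n(x))\le\omega(g,\frac{\sqrt{1-x^2}}{n})+\omega(g,\frac{1}{n^2})$, this closes $E_2$. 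Without a step of this kind, or a precise citation of a Timan-type pointwise theorem for this averaged Lagrange operator, your bound for $E_2$ remains unproved.
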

\begin{proof}
	Consider 
	\begin{align*}
		|\mathcal{D}_n(f)(\theta)-f(\theta)|&\leq \frac{n}{\pi}\sum\limits_{k=1}^n|S_{k,n}(\theta)|\int\limits_{0}^\pi |f(t)-f(\theta_k^{(n)})||S_{k,n}(t)|\ dt+\\
		&\qquad \quad \sum\limits_{k=1}^n|f(\theta_k^{(n)})-f(\theta)|S_{k,n}(\theta)|.\\
	\end{align*}
	Let $\delta>0$. Then, using the properties of the modulus of continuity, we have
	\[
	|f(t)-f(\theta_k^{(n)})|\leq (1+|t-\theta_k^{(n)}|\delta^{-1})\omega(f,\delta).
	\]
	Thus, we get
	\begin{align*}
		\frac{n}{\pi}\sum\limits_{k=1}^n&|S_{k,n}(\theta)|\int\limits_{0}^\pi |f(t)-f(\theta_k^{(n)})||S_{k,n}(t)|\ dt\\
		&\leq \omega(f,\delta)\frac{n}{\pi}\sum\limits_{k=1}^n|S_{k,n}(\theta)|\int\limits_{0}^\pi |S_{k,n}(t)|\ dt\\
		&\qquad + \omega(f,\delta)\delta^{-1}\frac{n}{\pi}\sum\limits_{k=1}^n|S_{k,n}(\theta)|\int\limits_{0}^\pi |t-\theta_k^{(n)}||S_{k,n}(t)|\ dt.
	\end{align*}
	Let $\delta=\delta_n(\theta)=\frac{n}{\pi}\sum\limits_{k=1}^n|S_{k,n}(\theta)|\int\limits_{0}^\pi |t-\theta_k^{(n)}||S_{k,n}(t)|\ dt$. 
	Now, 
	\[
	\frac{n}{\pi}\sum\limits_{k=1}^n|S_{k,n}(\theta)|\int\limits_{0}^\pi |f(t)-f(\theta_k^{(n)})||S_{k,n}(t)|\ dt\leq (C_5+1)\omega(f,\delta_n(\theta)),
	\]
	for some constant $C_5>0$.
	By Proposition $3.1$ of \cite{GK}, we have 
	\[
	\int\limits_{0}^\pi |t-\theta_k^{(n)}||S_{k,n}(t)|\ dt=\mathcal{O}(\frac{1+\log n}{n^2}).
	\]
	Therefore, we have $\delta(\theta)=\mathcal{O}(\frac{\log n+1}{n}).$
	We also have 
	\[
	\sum\limits_{k=1}^n|f(\theta_k^{(n)})-f(\theta)|S_{k,n}(\theta)|=\mathcal{O}(\omega(f\circ \cos^{-1},\frac{\sqrt{1-x^2}}{n})+\omega(f\circ \cos^{-1}, \frac{1}{n^2})), 
	\]
	where $x=\cos\theta$. Hence the result.
\end{proof}
\section{Convergence in the space $L^p[0,\pi]$}
In this section, we prove the boundedness and convergence of the sequence of operators $\{\mathcal{D}_n\}_{n\in\mathbb{N}}$ on the space $L^p[0,\pi]$. For $p=\infty$, the case is similar to that of $C[0,\pi]$, by the definition of the operators $\{\mathcal{D}_n\}_{n\in\mathbb{N}}$.
For the other cases, we first prove the uniform boundedness of $\{\mathcal{D}_n\}_{n\in\mathbb{N}}$ on $L^1[0,\pi]$ and then extend this to the space $L^p[0,\pi]$ for $1<p<+\infty$. To prove this, we use the well known Riesz-Thorin interpolation theorem. 

The following lemma establishes the uniform boundedness boundedness of the sequence $\{\mathcal D_n\}_{n\in\mathbb{N}}$ in the space $L^1[0,\pi]$.
\begin{lemma}\label{l1bdd}
		The sequence of operators $\{\mathcal{D}_n:L^1[0,\pi]\to L^1[0,\pi]\}_{n\in\mathbb{N}}$ is bounded uniformly in the space $L^1[0,\pi]$. 
\end{lemma}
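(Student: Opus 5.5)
The plan is to bound $\|\mathcal{D}_n(f)\|_1$ directly by integrating the defining formula (\ref{durr}) in $\theta$ and using Fubini--Tonelli. For $f\in L^1[0,\pi]$ and almost every $\theta$ we have
\[
|\mathcal{D}_n(f)(\theta)|\le \frac{n}{\pi}\int_0^\pi |f(t)|\,K_n(\theta,t)\,dt,\qquad K_n(\theta,t):=\sum_{k=1}^n |S_{k,n}(\theta)|\,|S_{k,n}(t)|.
\]
Integrating over $\theta\in[0,\pi]$ and exchanging the order of integration (everything is nonnegative, and the $S_{k,n}$ are continuous trigonometric-type functions, hence bounded) gives
\[
\|\mathcal{D}_n(f)\|_1\le \frac{n}{\pi}\int_0^\pi |f(t)|\Big(\sum_{k=1}^n |S_{k,n}(t)|\int_0^\pi |S_{k,n}(\theta)|\,d\theta\Big)dt .
\]
So it suffices to bound the inner function of $t$ uniformly in $t$ and $n$. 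The $L^1$ bound therefore reduces to a sup-norm bound of exactly the type already used in Lemma \ref{bdd}; indeed the kernel $K_n$ is symmetric in $(\theta,t)$, so $L^1$-boundedness is the dual statement of $C$-boundedness.

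Next I would invoke Proposition 3.1 of \cite{GK}, as in the proof of Lemma \ref{bdd}: $\int_0^\pi |S_{k,n}(\theta)|\,d\theta\le M/n$ with $M$ absolute. This turns the inner expression into at most $\frac{M}{n}\sum_{k=1}^n|S_{k,n}(t)|$. By Lemma \ref{lem}, $\sum_{k=1}^n |S_{k,n}(t)|<c_1$ for every $t\in[0,\pi]$. Hence $\|\mathcal{D}_n(f)\|_1\le \frac{n}{\pi}\cdot\frac{M}{n}\cdot c_1\|f\|_1=\frac{Mc_1}{\pi}\|f\|_1$. This constant is independent of $n$ and $f$, which is the claim. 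Measurability of $\mathcal{D}_n(f)$ and finiteness of each term are immediate, since $\mathcal{D}_n(f)$ is a finite linear combination of the continuous functions $S_{k,n}$ with coefficients $\int f S_{k,n}$, and these coefficients are finite because $S_{k,n}$ is bounded.

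The only step needing care is the justification of the Fubini interchange and the uniform use of the two cited bounds. The first follows from Tonelli. For the second, Lemma \ref{lem} is stated pointwise with an absolute constant, so it applies at every $t$. I do not expect a genuine obstacle. If one wished to avoid citing \cite{GK}, the main work would be re-deriving $\int_0^\pi|S_{k,n}|\le M/n$. For that I would use Grünwald's estimate $|S_{k,n}(t)|\lesssim \frac{1}{n^2(t-\theta_k^{(n)})^2}$ away from an $O(1/n)$ neighborhood of $\theta_k^{(n)}$, together with $|S_{k,n}|\le C$ near it. Integrating gives $O(1/n)$.
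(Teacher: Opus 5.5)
Your proof is correct and follows the paper's argument essentially step for step. The paper also integrates in $\theta$ and applies $\int_0^\pi|S_{k,n}|\le \mathcal{C}/n$ from Proposition 3.1 of \cite{GK}. It then interchanges the finite sum with the $t$-integral and closes with Grünwald's pointwise bound $\sum_k|S_{k,n}(t)|<c_1$ (Lemma \ref{lem}), obtaining the same constant $\mathcal{C}c_1/\pi$.
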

\begin{proof}
	From Proposition $3.1$ in \cite{GK}, we get $\int\limits_{0}^\pi|S_{k,n}(\theta)|\ d\theta\leq \frac{\mathcal{C}}{n}$, for some constant $\mathcal C>0$. Therefore, we obtain
	\begin{align*}
		\int\limits_{0}^\pi|\mathcal{D}_n(f)(\theta)|\ d\theta&\leq \frac{n}{\pi}\sum\limits_{k=1}^n\int\limits_{0}^\pi|S_{k,n}(\theta)|\ d\theta\int\limits_{0}^\pi |f(t)||S_{k,n}(t)|\ dt\\
		&\leq \frac{\mathcal Cn}{n\pi}\sum\limits_{k=1}^n \int\limits_{0}^\pi |f(t)||S_{k,n}(t)|\ dt\\
		&=\frac{\mathcal C}{\pi}\int\limits_{0}^\pi|f(t)|\sum\limits_{k=1}^n|S_{k,n}(t)|\ dt\\
		&\leq \frac{\mathcal Cc_1}{\pi}\int\limits_{0}^\pi|f(t)|\ dt.
	\end{align*}
	Let $\tilde C=\frac{\mathcal Cc_1}{\pi}$. Therefore, we have 
	$\|\mathcal{D}_n\|_1\leq \tilde C\|f\|_1$ for all $f\in L^1[0,\pi]$, where $\tilde C$ is independent of $n$ and $f$.
\end{proof}
Now, we prove the uniform boundedness of the sequence operators $\{\mathcal{D}_n\}_{n\in\mathbb{N}}$ on the space $L^p[0,\pi]$. To prove this, we recall a fundamental interpolation theorem in harmonic analysis, namely the Riesz--Thorin interpolation theorem (see Theorem 1.19, \cite{fourier}), stated below.
\begin{theorem}[Riesz--Thorin Interpolation]\label{RT}
	Let $1\leq p_0,p_1,q_0,q_1\leq +\infty$, and for $0<\theta<1$, define $p$ and $q$ by 
	\begin{equation}\label{con}
		\frac{1}{p}=\frac{1-\theta}{p_0}+\frac{\theta}{p_1},\quad 	\frac{1}{q}=\frac{1-\theta}{q_0}+\frac{\theta}{q_1}.
	\end{equation}
	If $T$ is a linear operator from $L^{p_0}+L^{p_1}
	$ to $L^{q_0}+L^{q_1}$ such that 
	\[
	\|Tf\|_{q_0}\leq M_0\|f\|_{p_0}, \ for\ f\in L^{p_0}
	\]
	and 
	\[
	\|Tf\|_{q_1}\leq M_1\|f\|_{p_1}, \ for\ f\in L^{p_1}.
	\]
	Then 
	\[
	\|Tf\|_{q}\leq M_0^{1-\theta}M_1^\theta\|f\|_{p}, \ for\ f\in L^{p}
	\]
\end{theorem}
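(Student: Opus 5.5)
The plan is to follow Thorin's complex-variable proof, with Hadamard's three lines lemma as the key tool. We work with complex-valued functions, since the argument needs complex exponents; the real-valued operators $\mathcal D_n$ extend to complex functions by linearity. By homogeneity, it suffices to show $|\int (Tf)\,g| \le M_0^{1-\theta}M_1^{\theta}$ for all simple $f$ with $\|f\|_p=1$ and all simple $g$ with $\|g\|_{q'}=1$, where $\frac1q+\frac1{q'}=1$. Duality then gives $\|Tf\|_q\le M_0^{1-\theta}M_1^\theta$ for simple $f$. The duality step is valid because simple functions are norming for $L^q$ on the $\sigma$-finite space $[0,\pi]$. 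A density argument then handles general $f\in L^p$.

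\textbf{Construction.} Write $f=\sum_j a_j e^{i\alpha_j}\chi_{A_j}$ and $g=\sum_l b_l e^{i\beta_l}\chi_{B_l}$ with $a_j,b_l>0$ and the $A_j$ (respectively the $B_l$) disjoint. Set
\[
\alpha(z)=\frac{1-z}{p_0}+\frac{z}{p_1},\qquad \beta(z)=\frac{1-z}{q_0'}+\frac{z}{q_1'} .
\]
For $p<\infty$ and $q>1$ define
\[
f_z=\sum_j a_j^{p\,\alpha(z)}e^{i\alpha_j}\chi_{A_j},\qquad g_z=\sum_l b_l^{q'\beta(z)}e^{i\beta_l}\chi_{B_l},
\]
so that $f_\theta=f$ and $g_\theta=g$. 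Put $F(z)=\int T(f_z)\,g_z$. Linearity of $T$ gives
\[
F(z)=\sum_{j,l} a_j^{p\alpha(z)}b_l^{q'\beta(z)}e^{i(\alpha_j+\beta_l)}\int T(\chi_{A_j})\chi_{B_l}.
\]
This is a finite combination of exponentials $e^{cz}$. Hence $F$ is entire and bounded on the strip $0\le \operatorname{Re} z\le 1$.

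\textbf{Boundary estimates.} On $\operatorname{Re} z=0$ we have $|f_{iy}|=|f|^{p/p_0}$, so $\|f_{iy}\|_{p_0}^{p_0}=\|f\|_p^p=1$. Likewise $\|g_{iy}\|_{q_0'}=1$. Hölder's inequality and the first hypothesis then give $|F(iy)|\le M_0$. In the same way $|F(1+iy)|\le M_1$. The three lines lemma yields $|F(\theta)|\le M_0^{1-\theta}M_1^\theta$, which is the desired bound, since $F(\theta)=\int (Tf)\,g$.

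\textbf{Degenerate cases and the main obstacle.} The cases $p=\infty$ or $q=1$ require adjusting or freezing the corresponding exponent. For example, if $p_0=p_1=p$ we take $f_z=f$ directly. Also, when $q=1$ we have $q'=\infty$, and we take $g$ of modulus one with no $z$-dependence. The step I expect to need most care is the passage from simple $f$ to all of $L^p$. For $p<\infty$ the plan is as follows. Take simple $f_m\to f$ in $L^p$, with $|f_m|\le|f|$. Split $f$ as $f\chi_{\{|f|>1\}}+f\chi_{\{|f|\le1\}}$, which places the pieces in $L^{p_0}$ and $L^{p_1}$ (assuming $p_0\le p_1$). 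The hypotheses then show $Tf_m\to Tf$ in $L^{q_0}+L^{q_1}$, so a subsequence converges almost everywhere. Fatou's lemma then transfers the bound to $f$. In the application in this paper the measure space is finite, and this step is routine.
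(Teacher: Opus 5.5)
The paper gives no proof of its own; it quotes the theorem from \cite{fourier} (Theorem 1.19). Your argument (duality against simple functions, the analytic families $f_z,g_z$, Hadamard's three lines lemma, then a density step) is the standard Thorin proof found there, so it is correct and follows the same route. One point needs stating more carefully: the endpoint bounds $M_0,M_1$ must be known for complex-valued $f$, not just carried over ``by linearity'', since complexification can enlarge operator norms in general. For $\mathcal{D}_n$ this is immediate, because the proofs of Lemmas \ref{bdd} and \ref{l1bdd} only use $|f|$.
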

Now, we have the following result. 
\begin{lemma}\label{lpbdd}
	The sequence of operators $\{\mathcal{GK}_n:L^p[0,\pi]\to L^p[0,\pi]\}_{n\in\mathbb{N}}$ is bounded uniformly in the space $L^p[0,\pi]$ for $1<p<+\infty$.
\end{lemma}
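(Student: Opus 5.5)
The plan is to interpolate between the two endpoint estimates already available for $\mathcal{D}_n$ (the operator written $\mathcal{GK}_n$ in the statement is evidently $\mathcal{D}_n$). The $L^1$ bound is Lemma \ref{l1bdd}: $\|\mathcal{D}_n f\|_1\leq \tilde C\|f\|_1$ with $\tilde C$ independent of $n$. For the $L^\infty$ bound I would not invoke Lemma \ref{bdd} directly, since it is stated only for continuous functions. Instead I would repeat its pointwise computation for an arbitrary $f\in L^\infty[0,\pi]$:
\[
|\mathcal{D}_n(f)(\theta)|\leq \frac{n}{\pi}\|f\|_\infty\sum_{k=1}^n|S_{k,n}(\theta)|\int_0^\pi|S_{k,n}(t)|\,dt\leq \frac{n}{\pi}\cdot\frac{M}{n}\,\|f\|_\infty\sum_{k=1}^n|S_{k,n}(\theta)|\leq \frac{Mc_1}{\pi}\|f\|_\infty .
\]
This uses Proposition 3.1 of \cite{GK} for $\int_0^\pi|S_{k,n}|\le M/n$ and Lemma \ref{lem} for the Lebesgue-type sum. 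It gives $\|\mathcal{D}_n f\|_\infty\le C_\infty\|f\|_\infty$ with $C_\infty=Mc_1/\pi$ independent of $n$. Only $|f(t)|\le\|f\|_\infty$ almost everywhere is needed, so the computation is valid on all of $L^\infty$.

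Before applying Theorem \ref{RT}, I would check that $\mathcal{D}_n$ is a well-defined linear operator on $L^1+L^\infty=L^1[0,\pi]$. Each $S_{k,n}$ is a trigonometric polynomial, hence bounded and continuous on $[0,\pi]$. Therefore $\int_0^\pi f S_{k,n}$ exists for every $f\in L^1$, and $\mathcal{D}_n f$ is a finite linear combination of the $S_{k,n}$. Linearity is immediate.

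Next I apply Theorem \ref{RT} with $p_0=q_0=1$, $p_1=q_1=\infty$, $M_0=\tilde C$ and $M_1=C_\infty$. For a given $1<p<\infty$, take $\theta=1-1/p\in(0,1)$. Then (\ref{con}) gives $1/p=(1-\theta)/1+\theta/\infty$, and the same for $q$, so $q=p$. The theorem yields
\[
\|\mathcal{D}_n f\|_p\leq \tilde C^{1/p}C_\infty^{1-1/p}\|f\|_p\leq \max\{\tilde C,C_\infty\}\,\|f\|_p .
\]
The right-hand constant depends on neither $n$ nor $p$, which gives the uniform boundedness.

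The main obstacle is the $L^\infty$ endpoint, more precisely the pointwise bound $\sup_\theta\sum_k|S_{k,n}(\theta)|\le c_1$ from Lemma \ref{lem}, uniformly over all of $[0,\pi]$, including near $0$ and $\pi$ where the shifted arguments $\theta\pm\pi/(2n)$ leave the interval. I would take this from Grünwald's lemma as stated, since it holds for all real $\theta$ by periodicity and evenness of the cosine expressions. I would also note that this same lemma is exactly what drives the $L^1$ estimate in Lemma \ref{l1bdd}.
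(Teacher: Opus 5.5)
Your proposal is correct and follows the paper's proof: it applies Theorem \ref{RT} with $p_0=q_0=1$ and $p_1=q_1=\infty$, interpolating between the $L^1$ bound of Lemma \ref{l1bdd} and the $L^\infty$ bound from the computation in Lemma \ref{bdd}. Your version is slightly more careful than the paper's in two respects: you check that the Lemma \ref{bdd} estimate holds for every $f\in L^\infty[0,\pi]$ rather than only for $C[0,\pi]$, and you place the exponents on the constants correctly as $\tilde C^{1/p}C_\infty^{1-1/p}$, whereas the paper writes them in swapped order (harmless, since neither constant depends on $n$).
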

\begin{proof}
	From Lemma \ref{bdd} and \ref{l1bdd}, we have  
	\[
	\|\mathcal{D}_n(f)\|_\infty\leq C\|f\|_\infty, \ for \ f\in L^\infty[0,\pi] \ for\ all\ n\in\mathbb{N}
	\]
	and 
	\[
	\|\mathcal{D}_n(f)\|_1\leq \tilde C\|f\|_1, \ for \ f\in L^1[0,\pi]\ for\ all\ n\in\mathbb{N}.
	\]
	
	Therefore, by applying Theorem \ref{RT}, we have 
	\[
	\|\mathcal{D}_n(f)\|_p\leq C^{1-\theta}\tilde C^\theta\|f\|_p, \ for \ f\in L^p[0,\pi]\ for\ all\ n\in\mathbb{N}.
	\]
	for $\theta\in (0,1)$, $p_0=q_0=1$, $p_1=q_1=\infty$ and $p$, $q$ satisfying \ref{con}. In other words, we see that for $1<p<+\infty$, 
	\[
	\|\mathcal{D}_n(f)\|_p\leq C_p\|f\|_p, \ for \ f\in L^p[0,\pi]\ for\ all\ n\in\mathbb{N},
	\]
	where $C_p$ does not depend on $n$ or $f$.
\end{proof}
\subsection{Convergence in the space $L^p[0,\pi]$ using a Korovkin-type Theorem}

Korovkin-type theorems are fundamental in approximation theory, as they provide a simple and effective criterion for establishing the convergence of sequences of positive linear operators. It is sufficient to verify convergence on just three polynomial test functions in order to guarantee convergence on the entire space \cite{korovkin, altomare}. This remarkable result unifies several classical approximation theorems in the literature, including those due to Bernstein, Weierstrass, and Fej\'er. Over time, these results have been extended to various functional settings, including Banach function spaces.

In this section, we recall some fundamental definitions concerning Banach function spaces and present a Korovkin-type theorem in this framework. Using this theorem, we extend the convergence results for $\{\mathcal{D}_n\}_{n\in\mathbb{N}}$ obtained in $L^p[0,\pi]$ to a broader class of Banach function spaces.

Let $(A, \mathcal{S}, \mu)$ be a measurable space, where $\mathcal{S}$ is a $\sigma$-algebra of measurable subsets of $A$. Let $\mathcal{M}$ denote the set of all measurable functions on $A$, and let $\mathcal{M}^+$ denote the set of all non-negative measurable functions on $A$.

First, we recall the definition of a function norm. 
\begin{definition}
	A mapping $\rho: \mathcal{M}^+ \to [0, +\infty]$ is called a \emph{function norm} if the following properties hold for all $f, g, f_n \in \mathcal{M}^+$, $a \geq 0$, and $E \in \mathcal{S}$:
	\begin{enumerate}
		\item $\rho(f) = 0$ if and only if $f = 0$ $\mu$-a.e.; $\rho(af) = a \rho(f)$; and $\rho(f+g) \leq \rho(f) + \rho(g)$;
		\item If $g \leq f$ $\mu$-a.e., then $\rho(g) \leq \rho(f)$;
		\item If $f_n \uparrow f$ $\mu$-a.e., then $\rho(f_n) \uparrow \rho(f)$;
		\item If $\mu(E) < +\infty$, then $\rho(\chi_E) < +\infty$;
		\item If $\mu(E) < +\infty$, then there exists a constant $C_E > 0$ such that
		\[
		\int_E f \, d\mu \leq C_E \rho(f),
		\]
		where $C_E$ depends on $E$ and $\rho$, but not on $f$.
	\end{enumerate}
\end{definition}

A Banach function space $X$ generated by $\rho$ is a Banach space consisting of functions $f \in \mathcal{M}$ equipped with the norm $\|f\|_X := \rho(|f|)$.

\begin{definition}
	A function $f \in X$ is said to have an \emph{absolutely continuous norm} if
	\[
	\|f \chi_{E_n}\|_X \to 0 \quad \text{as } n \to \infty,
	\]
	for every sequence $\{E_n\} \subset \mathcal{S}$ such that $E_n \to \emptyset$ $\mu$-a.e. (i.e., $\chi_{E_n} \to 0$ $\mu$-a.e.).
\end{definition}

Define
\[
X_a = \{ f \in X : f \text{ has an absolutely continuous norm} \}.
\]

\textbf{Special case:} Let $A = [a,b]$, $\mathcal{S}$ be the Borel $\sigma$-algebra, and $\mu$ be the Lebesgue measure.

We now focus on a particular case of Banach function spaces. Without loss of generality, let $A = [0,1]$, $\mu$ be the Lebesgue measure, and $\mathcal{S}$ the Borel $\sigma$-algebra. Let $\rho$ be a function norm on this measure space, and let $X$ be the corresponding Banach function space with norm $\|f\|_X = \rho(|f|)$.

For $\delta > 0$, define the shift operator $T_\delta$ on $X$ by
\[
T_\delta(f)(x) =
\begin{cases}
	f(x+\delta), & \text{if } x+\delta \in [0,1], \\
	0, & \text{otherwise}.
\end{cases}
\]

Let $X^S$ denote the closure in $X$ of the set
\[
\left\{ f \in X : \lim_{\delta \to 0} \|T_\delta(f) - f\|_X = 0 \right\}.
\]
It is known that $X^S$ includes $C[0,1]$ and, moreover, properly contains it (see \cite{vinaya, zeren}).

In 2022, Zeren et al.~\cite{zeren} proved a Korovkin-type theorem on the subspace $X^S$ and in 2025 Kiran Kumar and Vinaya further generalized this result to a non-positive setting, as stated below.

Let $B(X^S)$ denote the space of all bounded linear operators on $X^S$.

\begin{theorem}\cite{vinaya}\label{nonpositive}
	Let $X$ be a Banach function space such that $1 \in X_a$, and let $\{L_n\}_{n \in \mathbb{N}}$ be a sequence of bounded linear operators on $X^S$ satisfying:
	\begin{enumerate}
		\item $\lim\limits_{n \to \infty} L_n(g) = g$ in $C[0,1]$ for all $g \in \{1, t, t^2\}$;
		\item $\sup\limits_{n} \|L_n\|_{B(C[0,1])} < \infty$.
	\end{enumerate}
	Then $\lim\limits_{n \to \infty} L_n(f) = f$ in $X$ for all $f \in X^S$ if and only if	$\sup\limits_{n} \|L_n\|_{B(X^S)} < \infty.$
\end{theorem}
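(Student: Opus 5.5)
The statement is the non-positive Korovkin-type theorem of \cite{vinaya}. I would prove it by a density argument in $X^S$, with the three test functions controlling all continuous functions via condition 2.

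\textbf{Step 1: condition 2 upgrades test-function convergence to all of $C[0,1]$.} Since $\lim_n L_n(g)=g$ in $C[0,1]$ for $g\in\{1,t,t^2\}$, linearity gives $L_n(q)\to q$ uniformly for every polynomial $q$ in the span of these three. For a non-positive sequence this span is not enough, so I would pass to the algebra they generate; this is the obstacle addressed in the last paragraph. Granting that $L_n(q)\to q$ uniformly for every polynomial $q$, the rest follows quickly. Given $f\in C[0,1]$ and $\varepsilon>0$, Weierstrass gives a polynomial $q$ with $\|f-q\|_\infty<\varepsilon$. Then
\[
\|L_nf-f\|_\infty\le \|L_n\|_{B(C[0,1])}\varepsilon+\|L_nq-q\|_\infty+\varepsilon,
\]
so condition 2 yields $L_nf\to f$ uniformly.

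\textbf{Step 2: pass from uniform convergence to convergence in $X$.} Because $1\in X_a$, the constant function has finite norm. By the lattice property of $\rho$, for $h\in C[0,1]$ we get $\|h\|_X\le \|h\|_\infty\,\|1\|_X$. Hence $L_nf\to f$ in $X$ for every $f\in C[0,1]$.

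\textbf{Step 3: the "if" direction.} Suppose $K:=\sup_n\|L_n\|_{B(X^S)}<\infty$. I would use the fact, established in \cite{zeren} from $1\in X_a$ and the shift-continuity definition of $X^S$, that $C[0,1]$ is dense in $X^S$ (via convolution/Steklov-type mollification). Take $f\in X^S$ and $h\in C[0,1]$ with $\|f-h\|_X<\varepsilon$. Then
\[
\|L_nf-f\|_X\le K\varepsilon+\|L_nh-h\|_X+\varepsilon,
\]
and Step 2 finishes the argument.

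\textbf{Step 4: the "only if" direction.} If $L_nf\to f$ in $X$ for every $f\in X^S$, then $\{L_nf\}$ is bounded for each $f$. Since $X^S$ is a closed subspace of the Banach space $X$, it is itself Banach, and the uniform boundedness principle gives $\sup_n\|L_n\|_{B(X^S)}<\infty$.

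\textbf{Main obstacle.} The hard point is Step 1: for non-positive operators, convergence on $1,t,t^2$ plus uniform boundedness does not generally imply convergence on all polynomials. I would first try to check exactly how \cite{vinaya} phrases hypothesis 1, expecting either a strengthened hypothesis or an extra argument. Failing that, I would fall back on the interpretation that hypotheses 1 and 2 together are meant to yield $L_n f\to f$ in $C[0,1]$ for all continuous $f$, and use that as the input to Steps 2 and 3. A secondary delicate point is the density of $C[0,1]$ in $X^S$, which I would take from \cite{zeren}.
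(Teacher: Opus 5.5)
The paper gives no proof of this statement; it only quotes it from \cite{vinaya}. Your Steps 2--4 are sound. $X$-convergence follows from $\|h\|_X\le\|h\|_\infty\|1\|_X$, density of $C[0,1]$ in $X^S$ gives the ``if'' part, and Banach--Steinhaus on the closed subspace $X^S$ gives the ``only if'' part. One small correction: $\|1\|_X<\infty$ already follows from axiom 4. What $1\in X_a$ actually provides is $\|\chi_{(1-\delta,1]}\|_X\to 0$, which makes continuous functions shift-continuous despite the truncation in $T_\delta$, i.e.\ $C[0,1]\subseteq X^S$.

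The genuine gap is Step 1, and it cannot be closed. ``Passing to the algebra generated by $1,t,t^2$'' is not available, because a linear operator carries no information about products: $L_n(t^2)\to t^2$ says nothing about $L_n(t^3)$. The classical Korovkin argument gets from three test functions to all of $C[0,1]$ only by applying $L_n$ to the pointwise inequality $|f(t)-f(x)|\le\varepsilon+2\|f\|_\infty\delta^{-2}(t-x)^2$, and that step needs positivity.

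In fact the statement as written is false. Take $X=L^p[0,1]$ with $1\le p<\infty$, so that $1\in X_a$ and $X^S=X$. Let $e_0,e_1,e_2$ be the orthonormal shifted Legendre polynomials on $[0,1]$, and set, for every $n$,
\[
L_nf(x)=Pf(x):=\sum_{j=0}^{2}e_j(x)\int_0^1 f(s)e_j(s)\,ds .
\]
Then $Pg=g$ for $g\in\{1,t,t^2\}$. With $c=\sum_j\|e_j\|_\infty^2$ we have $\|Pf\|_\infty\le c\|f\|_\infty$ and $\|Pf\|_p\le c\|f\|_1\le c\|f\|_p$. So (1), (2) and $\sup_n\|L_n\|_{B(X^S)}<\infty$ all hold. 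Yet $L_n(t^3)=P(t^3)$ is a fixed polynomial of degree at most $2$, so $L_n(t^3)\not\to t^3$ and the ``if'' direction fails.

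What is true is the version in which hypothesis (1) is replaced by $L_ng\to g$ in $C[0,1]$ for every $g\in C[0,1]$. Given (2), this is equivalent to convergence for every monomial $t^k$, $k\ge 0$, by your Weierstrass argument. Under that hypothesis your Weierstrass argument and Steps 2--4 give a complete proof. This stronger hypothesis is exactly what the paper verifies, via Theorem \ref{main}, when it applies the result in its Corollary. So your fallback interpretation is the right one, but you must state it as a strengthened hypothesis rather than present it as a consequence of (1)--(2).
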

We apply Theorem \ref{nonpositive} to the sequence of operators $\{\mathcal D_n\}_{n\in\mathbb{N}}$ on the Banach function space, $L^p[0,\pi]$, since these operators are non-positive as already mentioned. We have the following result.
\begin{corollary}
	The sequence of operators $\mathcal{D}_n:L^p[0,\pi]\to L^p[0,\pi]$, for $1\leq p<+\infty$ satisfies the following convergence, that is
\[
\lim\limits_{n\to\infty}\|\mathcal{D}_n(f)-f\|_p=0\  for\  all\ f \in L^p[0,\pi].
\]
	\end{corollary}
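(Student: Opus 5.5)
The corollary will follow from Theorem \ref{nonpositive}, applied with $X=L^p$ on the interval $[0,\pi]$ rather than $[0,1]$. A linear change of variables $t\mapsto \pi t$ transports everything between the two intervals and preserves all the hypotheses up to constants.

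\textbf{Step 1: the setting.} The constant function $1$ has absolutely continuous norm in $L^p$ for $1\le p<\infty$: if $\chi_{E_n}\to 0$ a.e., then $\|\chi_{E_n}\|_p=\mu(E_n)^{1/p}\to 0$ by dominated convergence. Moreover, translation is continuous in $L^p$, i.e. $\|T_\delta f-f\|_p\to 0$ for every $f\in L^p$. To see this, approximate $f$ by continuous functions; the boundary strip where $T_\delta f=0$ has measure $\delta$ and so contributes only $o(1)$. Hence $X^S=L^p[0,\pi]$, and proving convergence on $X^S$ is the full statement.

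\textbf{Step 2: hypotheses (1)--(2).} Hypothesis (2), $\sup_n\|\mathcal D_n\|_{B(C[0,\pi])}<\infty$, is Lemma \ref{bdd}. Hypothesis (1) requires $\mathcal D_n(g)\to g$ uniformly for $g\in\{1,t,t^2\}$. This is immediate from Theorem \ref{main}, since these functions are continuous. For $g=1$ it is exact by the Proposition.

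\textbf{Step 3: the boundedness condition.} By Theorem \ref{nonpositive}, convergence in $L^p$ for all $f$ is equivalent to $\sup_n\|\mathcal D_n\|_{B(L^p)}<\infty$. For $p=1$ this is Lemma \ref{l1bdd}. For $1<p<\infty$ it is Lemma \ref{lpbdd}, i.e. Riesz--Thorin interpolation between Lemma \ref{l1bdd} and the $L^\infty$ bound.

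\textbf{Main obstacle.} The delicate point is that Lemma \ref{bdd} is stated on $C[0,\pi]$, whereas Riesz--Thorin needs the bound on $L^\infty$. The proof of Lemma \ref{bdd} only uses $|f(t)|\le\|f\|_\infty$ a.e. inside the integral. Since $\mathcal D_n$ integrates $f$, this gives the same bound for $f\in L^\infty$, so the interpolation is legitimate.

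If one prefers to avoid relying on the abstract theorem, there is a direct fallback. Continuous functions are dense in $L^p$, and for continuous $f$ Theorem \ref{main} gives uniform convergence, hence $L^p$ convergence on a finite interval. The uniform bound $\sup_n\|\mathcal D_n\|_{B(L^p)}\le C_p$ then extends the convergence to all of $L^p$ by the standard $\varepsilon/3$ argument:
\[
\|\mathcal D_nf-f\|_p\le (C_p+1)\|f-g\|_p+\|\mathcal D_ng-g\|_p .
\]
The only other matter to handle is the rescaling $[0,\pi]\leftrightarrow[0,1]$. It maps $\{1,t,t^2\}$ to polynomials of degree at most $2$, for which convergence again follows from Theorem \ref{main}.
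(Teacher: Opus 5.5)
Your main route is the paper's: you apply Theorem \ref{nonpositive}, take hypotheses (1)--(2) from Theorem \ref{main} and Lemma \ref{bdd}, and take the uniform $L^p$ bound from Lemmas \ref{l1bdd} and \ref{lpbdd}. The differences there are minor. You get $X^S=L^p[0,\pi]$ directly from continuity of translation in $L^p$, whereas the paper uses $C[0,\pi]\subseteq X^S$, closedness of $X^S$ and density of $C[0,\pi]$. You also make explicit that the proof of Lemma \ref{bdd} works verbatim for $f\in L^\infty$, which the paper's Riesz--Thorin step needs but does not say.

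Your fallback is a genuinely different and more elementary proof: uniform convergence on the dense subspace $C[0,\pi]$ plus the uniform $L^p$ bound. It is actually the more reliable of your two arguments. Theorem \ref{nonpositive} as quoted, with only $1,t,t^2$ in hypothesis (1), cannot be true without positivity. A fixed bounded projection onto polynomials of degree at most $2$ (e.g.\ the $L^2$-orthogonal projection) satisfies (1), (2) and the $L^p$ bound, yet does not converge to the identity. What really carries the corollary is that Theorem \ref{main} gives convergence for every $f\in C[0,\pi]$, not just for the test functions, and your fallback uses exactly that.
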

	\begin{proof}
Using Lemma \ref{bdd} and \ref{main}, the sequence of operators $\{\mathcal{D}_n\}_{n\in\mathbb{N}}$ defined on the Banach function space $L^p[0,\pi]$, for $1\leq p<+\infty$ satisfies the conditions $(1)$ and $(2)$ of Theorem \ref{nonpositive}. The uniform boundedness of the sequence on $L^p[0,\pi]$ for $1\leq p<+\infty$ has been established in Lemma \ref{l1bdd} and \ref{lpbdd}. Hence using Theorem \ref{nonpositive}, we conclude that 
\[
\lim\limits_{n\to\infty}\|\mathcal{D}_n(f)-f\|_p=0\ for\  all\ f \in L^p[0,\pi]^S.
\]
We have $C[0,\pi]\subseteq L^p[0,\pi]^S\subseteq L^p[0,\pi]$. Since $L^p[0,\pi]^S$ is closed and $C[0,\pi]$ is dense in $L^p[0,\pi]$, we must have $L^p[0,\pi]^S=L^p[0,\pi]$. Hence the convergence holds for all $f\in L^p[0,\pi]$. 
\end{proof}
\subsection{Quantitative Estimates in $L^p[0,\pi]$}
	In this section, we establish a quantitative estimate for the convergence of the Gr\"unwald--Durrmeyer operators in the space $L^p[0,\pi]$ for $1\leq p<+\infty$.  We consider the following version of Petree's K-functional, in which the derivative is measured in the uniform norm unlike the $L^p$- norm.

For $1\leq p<+\infty$, $f \in L^p[0.\pi]$ and $\delta > 0$, we consider
\[
K_p(f,\delta)
=
\inf_{g\in C^1[0,\pi]}
\left\{
\|f-g\|_{p}
+
\delta \|g'\|_{\infty}
\right\},
\]
$C^1[0,\pi]$ denotes the space of all continuously differentiable functions on $[0,\pi]$. 
The following theorem establishes a quantitative estimate for the convergence of $\{\mathcal{D}_n\}_{n\in\mathbb{N}}$ using this K-functional.
\begin{theorem}
	Let $1\leq p<+\infty$ and $f\in L^p[0,\pi]$, then
\[ 
\|\mathcal{D}_n(f)-f\|_p\leq R_pK_p(f,\tilde R_pm_{n}),
\]
for some positive constants $R_p$ and $\tilde R_p$ and 
\begin{align*}
m_{n}=\begin{cases}
	\frac{1+\log n}{n}, & p = 1, \\[8pt]
	\frac{1+\log n}{n}+\frac{1}{n^\frac{1}{p}}, & 1<p<+\infty.
\end{cases}
\end{align*}
\end{theorem}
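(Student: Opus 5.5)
The plan is the standard $K$-functional argument. Fix $g\in C^1[0,\pi]$ and split
\[
\|\mathcal{D}_n(f)-f\|_p\leq \|\mathcal{D}_n(f-g)\|_p+\|f-g\|_p+\|\mathcal{D}_n(g)-g\|_p\leq (C_p+1)\|f-g\|_p+\|\mathcal{D}_n(g)-g\|_p .
\]
The first estimate uses the uniform $L^p$ bounds of Lemma \ref{l1bdd} (for $p=1$) and Lemma \ref{lpbdd} (for $1<p<+\infty$). It then suffices to show
\[
\|\mathcal{D}_n(g)-g\|_p\leq \tilde R_p\, m_n\,\|g'\|_\infty
\]
for every $g\in C^1[0,\pi]$. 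After that we take the infimum over $g$ and set $R_p=\max\{C_p+1,1\}$.

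For smooth $g$ I would use the decomposition from the proof of Theorem \ref{main}. Since $\mathcal{D}_n(1)=1$,
\[
|\mathcal{D}_n(g)(\theta)-g(\theta)|\leq \frac{n}{\pi}\sum_{k=1}^n|S_{k,n}(\theta)|\int_0^\pi|g(t)-g(\theta_k^{(n)})||S_{k,n}(t)|\,dt+\sum_{k=1}^n|g(\theta_k^{(n)})-g(\theta)||S_{k,n}(\theta)| =: E_1(\theta)+E_2(\theta).
\]
\textbf{First term.} Use $|g(t)-g(\theta_k^{(n)})|\leq\|g'\|_\infty|t-\theta_k^{(n)}|$ together with the moment bound from Proposition 3.1 of \cite{GK}, $\int_0^\pi|t-\theta_k^{(n)}||S_{k,n}(t)|\,dt=\mathcal O\big(\frac{1+\log n}{n^2}\big)$. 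With Lemma \ref{lem} this gives $E_1(\theta)\leq C\frac{1+\log n}{n}\|g'\|_\infty$ uniformly in $\theta$. Hence its $L^p$ norm is bounded by $\pi^{1/p}C\frac{1+\log n}{n}\|g'\|_\infty$ for every $p$.

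\textbf{Second term.} Here $E_2(\theta)\leq \|g'\|_\infty\sum_k|\theta-\theta_k^{(n)}||S_{k,n}(\theta)|$, so everything reduces to bounding
\[
\Lambda_n(\theta):=\sum_{k=1}^n|\theta-\theta_k^{(n)}||S_{k,n}(\theta)|
\]
in $L^p$. This is the step I expect to be the main obstacle, and I will treat $p=1$ and $p>1$ separately.

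For $p=1$, integrate in $\theta$ and swap sum and integral. The same moment estimate of \cite{GK}, read with the roles of variables exchanged, gives
\[
\|\Lambda_n\|_1\leq \sum_k\int_0^\pi|\theta-\theta_k^{(n)}||S_{k,n}(\theta)|\,d\theta=\mathcal O\Big(n\cdot\frac{1+\log n}{n^2}\Big)=\mathcal O\Big(\frac{1+\log n}{n}\Big).
\]
This matches $m_n$ for $p=1$.

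For $1<p<+\infty$, use the crude interpolation $\|\Lambda_n\|_p\leq\|\Lambda_n\|_\infty^{1-1/p}\|\Lambda_n\|_1^{1/p}$. For the sup bound, Grünwald's pointwise estimate $|S_{k,n}(\theta)|\lesssim \frac{1}{n^2(\theta-\theta_k^{(n)}-\pi/(2n))^2}$ away from the node, together with $|S_{k,n}|\lesssim 1$ near it, gives
\[
\Lambda_n(\theta)\lesssim \frac1n+\sum_{j\geq1}\frac{j/n}{j^2},
\]
which is $\mathcal O\big(\frac{1+\log n}{n}\big)$, or at worst $\mathcal O(1)$ via Lemma \ref{lem}. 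If only $\|\Lambda_n\|_\infty=\mathcal O(1)$ is available, the interpolation still yields $\|\Lambda_n\|_p\lesssim\big(\frac{1+\log n}{n}\big)^{1/p}$. I would then try to absorb this into $\frac{1+\log n}{n}+n^{-1/p}$. A cleaner route is to split $[0,\pi]$ into the boundary layers of width $\sim 1/n$ near $0$ and $\pi$, where $\Lambda_n=\mathcal O(1)$, and the interior, where the $\frac{1+\log n}{n}$ bound holds. The boundary layers then contribute $(1/n)^{1/p}$ to the $L^p$ norm, which is precisely where the extra term $n^{-1/p}$ in $m_n$ should come from.

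Collecting the two terms gives $\|\mathcal{D}_n(g)-g\|_p\leq\tilde R_p m_n\|g'\|_\infty$, and therefore $\|\mathcal{D}_n(f)-f\|_p\leq R_p\big(\|f-g\|_p+\tilde R_pm_n\|g'\|_\infty\big)$. Taking the infimum over $g\in C^1[0,\pi]$ yields the claimed bound $R_pK_p(f,\tilde R_pm_n)$.
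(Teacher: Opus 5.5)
Your proof is correct, but for the decisive term it takes a different route from the paper.

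The paper splits $|t-\theta|\le|t-\theta_k^{(n)}|+|\theta-\theta_k^{(n)}|$. This is your $E_1+E_2$ decomposition, except that the second piece carries the extra bounded factor $\frac{n}{\pi}\int_0^\pi|S_{k,n}(t)|\,dt$. The paper treats the first piece exactly as you do. For the second piece it never bounds $\Lambda_n$ pointwise. Instead it applies Minkowski's inequality summand by summand and inserts the moment bound $\bigl(\int_0^\pi|\theta-\theta_k^{(n)}|^p|S_{k,n}(\theta)|^p\,d\theta\bigr)^{1/p}=\mathcal{O}(n^{-1-1/p})$ for $p>1$, taken from Proposition 3.1 of \cite{GK}. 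Adding up $n$ such terms is what produces the $n^{-1/p}$ in $m_n$.

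Your route uses Gr\"unwald's pointwise decay of $S_{k,n}$ instead. That decay holds uniformly in $\theta\in[0,\pi]$ and in $k$: the paper uses it this way in Theorem \ref{main}, and it can be checked from the closed form of $S_{k,n}$. Together with $|S_{k,n}|\le c_1$ from Lemma \ref{lem}, it gives $\Lambda_n(\theta)=\mathcal{O}\bigl(\frac{1+\log n}{n}\bigr)$ at every $\theta$, including near $0$ and $\pi$. Then $\|\Lambda_n\|_p\le\pi^{1/p}\|\Lambda_n\|_\infty$ finishes the job, so no interpolation is needed.

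What your approach buys is a sharper estimate: rate $\frac{1+\log n}{n}$ for every $1\le p<+\infty$. This implies the stated theorem because $m_n\ge\frac{1+\log n}{n}$. What the paper's approach buys is that it needs only integrated moments of the individual $S_{k,n}$, not pointwise kernel bounds.

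You should commit to the sup bound and drop the hedges. The fallback of absorbing $\bigl(\frac{1+\log n}{n}\bigr)^{1/p}$ into $m_n$ would fail, because the factor $(1+\log n)^{1/p}$ is unbounded. The boundary-layer explanation is also a misdiagnosis. $\Lambda_n$ does not degenerate at the endpoints; the $n^{-1/p}$ term comes from the paper's termwise Minkowski step, not from any endpoint behaviour.
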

\begin{proof}
		Let $f\in L^p[0,\pi]$, $g\in C^1[0,\pi]$ and $\theta\in [0,\pi]$. Then, 
	\[
	\mathcal{D}_n(f)(\theta)-f(\theta)=\mathcal{D}_n(f-g)(\theta)+\mathcal{D}_n(g)(\theta)-g(\theta)+g(\theta)-f(\theta).
	\]
	Therefore, we have 
	\begin{align*}
		\|\mathcal{D}_n(f)-f\|_p&\leq \|\mathcal{D}_n(f-g)\|_p+\|\mathcal{GK}_n(g)-g\|_p+\|g-f\|_p.\\
		&\leq
		C_p\|f-g\|_p+\|\mathcal{D}_n(g)-g\|_p+\|g-f\|_p\\ 
		&\leq (C_p+1)\|f-g\|_p+\|\mathcal{D}_n(g)-g\|_p\\
		&=(C_p+1)\|f-g\|_p+\|\mathcal{D}_n(g)-g\|_p, 
	\end{align*}
	since $\|\mathcal{}_n(f)\|_p\leq C_p\|f\|_p$ for some constant $C_p>0$.
	
	Applying mean value theorem, for $t,\theta\in [0,\pi]$, there exists a $\xi$ between $t$ and $\theta$ such that $|g(t)-g(\theta)|=|g'(\xi)||t-\theta|\leq \|g'\|_\infty|t-\theta|$. Therefore, we have 
	\begin{align*}
		|\mathcal{D}_n(g)(\theta)-g(\theta)|&\leq \frac{n}{\pi}\sum\limits_{k=1}^n|S_{k,n}(\theta)|\int\limits_{0}^\pi |g(t)-g(\theta)||S_{k,n}(t)|\ dt\\
		&\leq \|g'\|_\infty\frac{n}{\pi}\sum\limits_{k=1}^n|S_{k,n}(\theta)|\int\limits_{0}^\pi |t-\theta||S_{k,n}(t)|\ dt\\
		&\leq \|g'\|_\infty\frac{n}{\pi}\sum\limits_{k=1}^n|S_{k,n}(\theta)|\int\limits_{0}^\pi |t-\theta_k^{(n)}||S_{k,n}(t)|\ dt+\\
		&\quad \|g'\|_\infty\frac{n}{\pi}\sum\limits_{k=1}^n|\theta-\theta_k^{(n)}||S_{k,n}(\theta)|\int\limits_{0}^\pi |S_{k,n}(t)|\ dt.
	\end{align*}
	The first expression becomes
	\begin{align*}
		\|g'\|_\infty\frac{n}{\pi}\sum\limits_{k=1}^n|\theta-\theta_k^{(n)}||S_{k,n}(\theta)|\int\limits_{0}^\pi |S_{k,n}(t)|\ dt&\leq C_6\|g'\|_\infty n\left (\frac{1+\log n}{n^2}\right )\sum\limits_{k=1}^n|S_{k,n}(\theta)|\\
		&\leq C_6c_1\|g'\|_\infty\left (\frac{1+\log n}{n}\right ).
		\end{align*} 
	
	Applying Minkowski's inequality, we get 
	\begin{align*}
		\|\mathcal{D}_n(g)-g\|_p&\leq C_6c_1\|g'\|_\infty\left (\frac{1+\log n}{n}\right )+\\
		&\quad \|g'\|_\infty \frac{n}{\pi}\sum\limits_{k=1}^n\left (\int\limits_{0}^\pi|\theta-\theta_k^{(n)}|^p|S_{k,n}(\theta)|^p\ d\theta \right )^\frac{1}{p}\int\limits_{0}^\pi |S_{k,n}(t)|\ dt.
	\end{align*}
	By Proposition $3.1$ of \cite{GK}, we get 
		\begin{enumerate}
		\item[(i)] \[
		\left (\int\limits_{0}^{\pi} |\theta-\theta_k^{(n)}|^p \, |S_{k,n}(\theta)|^p \, d\theta\right )^\frac{1}{p}
		=
		\begin{cases}
			\mathcal{O}\!\left(\dfrac{\log n + 1}{n^2}\right), & p = 1, \\[8pt]
			\mathcal{O}\!\left(\dfrac{1}{n^{1+\frac{1}{p}}}\right), & 1<p<+\infty.
		\end{cases}
		\]
		\item[(ii)]
		\[
		\left (\int\limits_{0}^{\pi} |S_{k,n}(\theta)|^p \, d\theta\right )^\frac{1}{p}
		=\mathcal{O}\left (\frac{1}{n^\frac{1}{p}}\right ), \quad 1\leq p<+\infty.
		\			\]
	\end{enumerate}
	Now, we have the following two cases.
	
\textbf{Case 1: $p=1$} From $(i)$ and $(ii)$, we get 
\[
\|\mathcal{D}_n(g)-g\|_1\leq \tilde C_{1}\|g'\|_\infty\left (\frac{1+\log n}{n}\right ).
\]
\textbf{Case 2: $1<p<+\infty$} Again, from $(i)$ and $(ii)$, we obtain
\[
\|\mathcal{D}_n(g)-g\|_p\leq \tilde C_{p}\|g'\|_\infty\left (\frac{1+\log n}{n}+\frac{1}{n^\frac{1}{p}}\right ),
\]
where $C_{1,p}>0$ and $C_{2,p}>0$ are constants. 
Let 
\[
m_{n}=\begin{cases}
	\frac{1+\log n}{n}, & p = 1, \\[8pt]
	\frac{1+\log n}{n}+\frac{1}{n^\frac{1}{p}}, & 1<p<+\infty.
\end{cases}
\]
	\begin{align*}
	\|\mathcal{D}_n(f)-f\|_p&
	\leq (C_p+1)\|f-g\|_p+\|\mathcal{D}_n(g)-g\|_p\\
	&\leq (C_p+1)\|f-g\|_p+\tilde C_pm_{n}\|g'\|_\infty\\
	&=(C_p+1)\{\|f-g\|_1+\frac{\tilde C_p}{C_p+1}m_{n}\|g'\|_\infty\}.
\end{align*}
	Now taking infimum over $g\in C^1[0,\pi]$, we have
\[ 
\|\mathcal{D}_n(f)-f\|_p\leq \tilde C_pK_p(f,\tilde R_pm_{n}),
\]
where $R_p=C_p+1$ and $\tilde R_p=\frac{\tilde C_p}{(C_p+1)}$.
\end{proof}
\section*{Concluding Remarks}
In this paper, we constructed a Durrmeyer-type variant of the classical Gr\"unwald interpolation operators on the space $L^p[0,\pi]$ for $1\leq p\leq+\infty$. We established the boundedness and convergence of these operators in this space. Moreover, we obtained quantitative estimates for the convergence of these operators using the modulus of continuity and a K-functional. Extension of these convergence results and the corresponding quantitative estimates on more general Banach function spaces remains an open problem which will be addressed in the future.
\section*{Acknowledgments}
The author wishes to thank Prof. M. N. N. Namboodiri and Dr. V. B. Kiran Kumar for fruitful discussions on related topics.

\end{document}